\documentclass[leqno,english]{amsart}
\usepackage{amsmath,amssymb,amsthm,enumerate,esint,mathtools,xparse}
\usepackage[svgnames]{xcolor}
  \usepackage{hyperref}
 \usepackage{empheq}

  \usepackage{bbm}

 \usepackage{mathrsfs} 
\usepackage{mathabx}

\textheight 9in \topmargin -0.5in \oddsidemargin 0.25in 
\textwidth 6in \evensidemargin \oddsidemargin \marginparsep 3pt
\marginparwidth 67pt

\numberwithin{equation}{section}
\newtheorem{definition}{Definition}[section]
\newtheorem{theorem}{Theorem}[section]

\newtheorem{lemma}[theorem]{Lemma}
\newtheorem{proposition}[theorem]{Proposition}

\newtheorem{remark}[theorem]{Remark}

\newcommand{\bke}[1]{\left ( #1 \right )}
\newcommand{\bkt}[1]{\left [ #1 \right ]}

\newcommand{\norm}[1]{\left \| #1 \right \|}
\newcommand{\bka}[1]{{\langle #1 \rangle}}
\newcommand{\abs}[1]{\left | #1 \right |}

\newcommand\al{\alpha}

\newcommand\ga{\gamma}
\newcommand\de{\delta}
\newcommand\ep{\epsilon}

\renewcommand\th{\theta}
\newcommand\ka{\kappa}
\newcommand\la{\lambda}
\newcommand\si{\sigma}

\newcommand\om{\omega}

\newcommand\De{\Delta}

\newcommand\Om{\Omega}
\newcommand\Rg{\mathscr{R}_g}

\newcommand{\R}{\mathbb{R}}

\renewcommand{\div}{\mathop{\rm div}\nolimits}

\newcommand{\pd}{\partial}
\newcommand{\nb}{\nabla}
\newcommand{\td}{\tilde}

\renewcommand{\bar}[1]{\overline{#1}}
\newcommand{\lec}{{\ \lesssim \ }}

\newcommand{\EQ}[1]{\begin{equation}\begin{split} #1 \end{split}\end{equation}}
\newcommand{\EQN}[1]{\begin{equation*}\begin{split} #1 \end{split}\end{equation*}}
\newcommand{\EN}[1]{\begin{enumerate} #1 \end{enumerate}}

\DeclarePairedDelimiter{\oldnormaux}{\bracevert}{\bracevert}

\NewDocumentCommand{\oldnorm}{som}{%
  \IfBooleanTF{#1}
    {\oldnormaux*{#3}}
    {\IfNoValueTF{#2}
       {\oldnormaux*{\vphantom{dq}#3}}
       {\oldnormaux[#2]{#3}}%
    }%
}

\setcounter{tocdepth}{1}

\begin{document}
\title%[Asymmetric gas bubble dynamics in liquid with thermal and viscous effects]
{
Asymmetric deformations of a perturbed spherical bubble in an incompressible fluid
}

\author[C.-C. Lai]{Chen-Chih Lai}
\address{\noindent
Department of Mathematics,
Columbia University , New York, NY, 10027, USA}
\email{cl4205@columbia.edu / cclai.math@gmail.com}

\author[M. I. Weinstein]{Michael I. Weinstein}
\address{\noindent
Department of Applied Physics and Applied Mathematics and Department of Mathematics,
Columbia University , New York, NY, 10027, USA}
\email{miw2103@columbia.edu}

\begin{abstract} 
We study the dynamics of a gas bubble in a fluid with surface tension, starting near a spherical equilibrium. 
While there are many studies and applications of radial bubble dynamics, the theory of general deformations from a spherical equilibrium is less developed.  
We aim to understand how asymmetrically perturbed equilibrium bubbles evolve toward spherical equilibrium due to thermal or viscous dissipation in an incompressible liquid.

We focus on the 
 {\it isobaric approximation} \cite{Prosperetti-JFM1991}, under which the gas pressure within the bubble is spatially uniform and obeys the ideal gas law.
The liquid outside the bubble is incompressible,  irrotational, and has surface tension. 
We prove that any equilibrium gas bubble must be spherical by showing that the bubble boundary is a closed surface of constant mean curvature.

We then study the initial value problem (IVP) for the coupled PDEs, constitutive laws and interface conditions of the isobaric approximation for general (asymmetric) small initial perturbations of the spherical bubble in the linearized approximation. Our first result, considering thermal damping without viscosity, proves that the linearized IVP is globally well-posed. The monopole (radial) component of the perturbation decays exponentially over time, while the multipole (non-radial) components undergo undamped oscillations. This indicates a limitation of the isobaric model for non-spherical dynamics.  Our second result, incorporating viscous dissipation, shows that the IVP is linear and nonlinearly ill-posed due to an incompatibility of normal stress boundary conditions \underline{for non-spherical solutions}  and the irrotationality assumption. 
Our study concludes that to accurately capture the dynamics of general deformations of a gas bubble, the model must account for either vorticity generated at the bubble-fluid boundary, spatial non-uniformities in the gas pressure, or both.
 \end{abstract}
\maketitle

\tableofcontents

\section{Introduction}

The dynamics of a gas bubble in a liquid, when perturbed from a spherical equilibrium,   is a question of fundamental interest in fluid dynamics and its applications  \cite{Leighton-IJMPB2004, Leighton-book2012, Brennen-book2014}.
While there are many studies and applications of purely radial  bubble dynamics, the theory of general deformations from a spherical equilibrium  is much less developed. Non-spherical (shape) deformations are known to play an important role in  physical phenomena \cite{BLD-PRL1995, BKTW-RSL1999, Leighton-book2012, Brennen-book2014}.

Physical intuition suggests that, due to  dissipative mechanisms, as time advances the asymmetrically perturbed  equilibrium bubble will evolve toward a spherical equilibrium. We study this question in the setting of an incompressible liquid, in which the dissipation mechanisms are thermal diffusion or viscosity.  In particular, we study general small deformations of a nearly spherical gas bubble, governed by an ideal gas law, 
  in an incompressible fluid  under the influence of thermal and viscous dissipation mechanisms.  Other dissipation mechanisms, for example  radiation damping via acoustic waves in the case where the liquid is compressible (see, for example, \cite{KE-1972, KK-JAP1956, KM-JASA1980, LP-JFM1987, PL-JFM1986, WB-JFM2010, WB-JFM2011,SW-SIMA2011,CTW-SIMA2013}), are not considered here.
  
An approximation, which is valid in many physical situations, originating in the work of A. Prosperetti \cite{Prosperetti-JFM1991},  is the {\it isobaric approximation}, for which the gas pressure within the bubble evolves with time but is spatially uniform. Further, the fluid is incompressible and has surface tension; see also  \cite{bv-SIMA2000}. In \cite{LW-vbas2022, LW-vbaslinear2023} we studied, within this approximation, the global in time  nonlinear dynamics of a spherical gas bubble subject to radially symmetric perturbations; the manifold of spherical bubble equilibria, which is parametrized by the bubble mass, is nonlinearly exponentially stable.

 In this paper we initiate the in-depth analytical study of the isobaric approximation in the setting of general non-spherically symmetric perturbations. 
We focus on the case where the liquid is irrotational; the vorticity in the liquid is equal to zero. We prove that any equilibrium gas bubble must be spherical by showing that the bubble boundary is a closed surface of constant mean curvature.

% In particular, we study the linearized time-evolution of the perturbation about a spherical equilibrium 
 % via a spherical harmonic decomposition of the perturbation into its monopole / radial mode and multipole / shape mode components. 
 
We then  turn to the dynamics of  infinitesimal perturbations of the spherical equilibrium, which are governed by the linearized evolution equation.
Our first result concerns the case where there is thermal damping but no viscous dissipation. We prove that the initial value problem (IVP) for the linearized evolution is well-posed globally in time. Further, we find that the monopole  (radial) component of perturbation damps toward zero exponentially fast as time tends to infinity. In contrast,  the multipole components of the perturbation merely undergo undamped  oscillations with time, as in the case of a polytropic gas bubble in an inviscid and incompressible fluid with no damping mechanisms  \cite{Rayleigh-London1917,Lamb-book}.  We demonstrate that this is due to the spatial uniformity of the gas pressure. Hence, our results suggest that the effect of thermal damping on shape modes needs to be captured either in the framework where the vorticity is non-zero or in corrections to the uniform gas pressure (isobaric) model. 
%\textcolor{red}{[Do we have an opinion of which corrected model is right?]}
% \textcolor{red}{[What do we conclude? Could it be that  the long time evolution of general bubble dynamics is captured by the isobaric *rotational* dynamics or must we  consider models beyond the isobaric approximation to capture the long time dynamics?]}

Our second result concerns the linearized time-evolution where,  in addition to thermal damping, we allow viscous dissipation. We prove that the IVP in this case is (linearly and nonlinearly)  ill-posed.  This is consistent with the physical expectation that general deformations near the boundary of a \underline{viscous}  fluid  will generate vorticity 
\cite{Joseph-JFM2003, Joseph-IJMF2006}.
Mathematically this is manifested by the fact that a non-zero viscosity component of the momentum stress tensor implies normal stress boundary conditions at the fluid-bubble interface which are  incompatible with the assumption of irrotationality.  We note that a property of the purely radial dynamics is that the  radial / monopole component of the perturbation undergoes both thermal and viscous damping (see Proposition \ref{prop-l=0} and \cite{LW-vbaslinear2023}). However, a smooth and radial velocity field is necessarily irrotational, so there is no contradication.

\subsection{Structure of the paper and overview of results}
{\ }\smallskip

%(V) \emph{Equilibrium bubbles of the isobaric approximated thermal model when the surrounding liquid is inviscid and irrotational.}
%%\begin{enumerate}
%%\item We construct an equilibrium solution with nontrivial liquid flow ${\bf v}_{l,*}\not\equiv0$ and spherical bubble $\Om_*=B_{R_*}$ (Theorem \ref{prop-equilib-nt}), using the example of stationary incompressible Euler flow given by Gavrilov \cite{Gavrilov-GAFA2019}
%%
%%\item We construct a nonspherically symmetric, horn-torus-shaped, equilibrium bubble $\Om_*$ (Theorem \ref{thm-asym-equilib}). 
%Theorem \ref{thm-irro-equilib} provides a characterization of irrotational equilibrium bubbles. 
%It demonstrates that the surface tension and the irrotational assumption of the flows necessitate that the equilibrium liquid flow be trivial and the equilibrium bubbles be spherical. 
%%\end{enumerate}

In Section 2 we give a brief introduction of the mathematical formulation for the uniform gas pressure (isobaric approximation) model.

In Section 3 we present this approximate model for the case of an irrotational fluid. In the irrotational framework, we prove that surface tension is sufficient to force any equilibrium bubble into a spherical shape (Theorem \eqref{thm-irro-equilib}). In contrast if allows for the fluid flow to be rotational, there are examples 
of non-spherically symmetric equilibrium bubbles; see Remark \ref{rem:rotational-examples} and the forthcoming article \cite{LW-aeb}.
 
In Section 4 we formulate the linearized initial value problem for the isobaric approximation and state our main results:

(I) \emph{Theorem \ref{thm-wellposed}:\ Linear well-posedness and long-term behavior of the solution in the inviscid case.}
When viscosity is set to zero, the IVP for the  linearized evolution is well-posed.
Further, while the monopole / radial mode decays exponentially in time, the multipole / shape modes undergo undamped oscillations.
Since the shape modes do not decay, this points to a limitation of the isobaric model for describing asymmetric dynamics.  

(II) \emph{Theorem \ref{thm-linear-illposed}:\ Ill-posedness of the IVP in the viscous and irrotational setting.}
%In the viscous case, the shape / multipole-mode surface perturbations are identically zero due to the present of viscosity and the fluid irrotationality.
%In particular, when 
The viscous and thermally dampled linearized model is ill-posed for general (nonspherically symmetric) initial data.
The source of ill-posedness is an incompatibility of the 
 of the normal stress boundary conditions, \underline{for non-spherical solutions},  and the irrotationality assumption; any regular solution is constrained to be spherically symmetric. 
 
Our study therefore shows that to capture the correct dynamics of general deformations of a gas bubble, one requires a model which accounts for either 
vorticity generated at the bubble-fluid boundary or spatial non-uniformities in the gas pressure or both.

%(III) \emph{Ill-posedness from the incompatibility between the viscosity and the irrotationality. (Theorem \ref{thm-irro-illposed})}
%While the model is nonlinear well-posed (see \cite{bv-SIMA2000}) in the spherically symmetric regime, we prove that, in the irrotational regime and when the viscosity is %present, the initial-value problem is nonlinear ill-posed for nonspherical solutions due to incompatibility of viscosity and irrotationality.

In Section 5 we present a detailed proof of Theorem \ref{thm-wellposed}  via spherical harmonic and multipole expansions.  

In Section 6 we prove on linear ill-posedness. 

In Section 7, we build  on Theorem \ref{thm-linear-illposed} to prove  nonlinear ill-posedness of the viscously damped irrotational dynamics.

\subsection{Some future directions and open problems}
{\ }\smallskip

1. \emph{Nonlinear well-posedness of the approximate model of Prosperetti in the inviscid case.}
In the present paper, the approximate model is proved to be nonlinearly ill-posed in the viscous case and linearly well-posed in the inviscid case.
A natural direction for future research is to explore the nonlinear well-posedness of the model in the inviscid and thermally damped case.

2. \emph{Develop a  model and analysis of  thermally damping  shape modes.}
In this paper, we prove that for isobaric (uniform gas pressure) approximation,   the shape / multipole modes of bubble do not decay. 
In ongoing research \cite{LW-lsf} we consider a physically more accurate model that allows for  spatial variations for bubble  gas pressure.
Naturally, we propose replacing the uniform gas pressure assumption with the momentum equation:
\[
\rho_g\pd_t{\bf v}_g + \rho_g{\bf v}_g\cdot\nb{\bf v}_g = -\nb p_g,
\]
where $\rho_g$, ${\bf v}_g$, and $p_g$ are the gas density, velocity, and pressure, respectively.

3. \emph{Dynamic stability of rotational bubble (viscous effects in shape modes).}
Theorem \ref{thm-irro-illposed} shows that the dynamics of the irrotational and invsicid fluid / bubble system is ill-posed. 
Therefore, one needs to study the general (rotational) model in the viscous case.
An approach  is to employ the Helmholtz decomposition to decompose the flows into a rotational and an irrotational parts;  see, for example, \cite{Joseph-PNAS2006}.

4. \emph{Nonlinear dynamics of a bubble in a compressible fluid; acoustic radiation damping mechanism.} Consider the situation where the surrounding liquid is compressible. 
For the problem where there is neither thermal nor viscous damping mechanisms, it has been shown in \cite{SW-SIMA2011} that the spherically symmetric equilibrium bubbles are linearly asymptotically stable via the emission,  spreading and decay of acoustic radiation. It is natural to seek a nonlinear asymptotic stability theory in this setting.

%\subsection{Notation and conventions}
%
%\begin{enumerate}
%\item 
%\end{enumerate}

\bigskip\noindent{\bf Acknowledgements.}
The authors thank Andrea Prosperetti, Michael Miksis, Juan J. L. Vel\'azquez, and Robert Pego for very stimulating discussions.
CL and MIW are supported in part by the Simons Foundation Math + X Investigator Award \#376319 (MIW). 
CL is also supported by AMS-Simons Travel Grant.
MIW is also supported in part by National Science Foundation Grant DMS-1908657 and DMS-1937254.

\section{Asymmetric dynamics of the model with uniform gas pressure}\label{appx-old-model}

We consider the spatial uniform (isobaric) approximate model of Prosperetti \cite{Prosperetti-JFM1991}; see also \cite[(3.1)--(3.3)]{LW-vbas2022}). This model describes the evolution of  deforming gas bubble which occupies a region 
simply connected subset of $\R^3$, $\Omega(t)$,  the  evolving gas within the bubble and the surrounding incompressible fluid. The boundary of the bubble, $\partial\Omega(t)$, is parametrized by a function $\boldsymbol{\om}:\xi\in\mathbb S^2\mapsto \boldsymbol{\om}(\xi,t)\in\mathbb R^3$.  The liquid is described by ${\bf v}_l$, the liquid velocity, $p_l$ and the liquid pressure. The gas is described by  $\rho_g$, the gas density, ${\bf v}_g$, the gas velocity, $p_g$, the gas pressure, $T_g$, the gas temperature, and $s$, the specific entropy. These are related by the following system of PDEs, constituitive laws and fluid-gas interface conditions
 \begin{subequations}\label{eq1.1simplified}
\begin{empheq}[right=\empheqrbrace\text{in $\R^3\setminus \Om(t)$, $t>0$,}]{align}
\pd_t {\bf v}_l + {\bf v}_l\cdot\nb{\bf v}_l=&\, \frac{\mu_l}{\rho_l}\De{\bf v}_l - \dfrac1{\rho_l}\, \nb p_l, \label{eq1.1simplified-a}\\
\div {\bf v}_l =&\, 0, \label{eq1.1simplified-b}
%T_l(x,t) = &\, T_\infty,\quad \text{a prescribed constant},\label{eq1.1simplified-c}
\end{empheq}
\end{subequations}

 \begin{subequations}\label{eq1.2simplified}
\begin{empheq}[right=\empheqrbrace\text{in $\Om(t)$, $t>0$,}]{align}
\pd_t \rho_g + \div(\rho_g{\bf v}_g) =&\, 0,\label{eq1.2simplified-a}\\
p_g =&\, p_g(t), \label{eq1.2simplified-b}\\
\rho_g T_g \bke{\pd_t s + {\bf v}_g\cdot\nb s } =&\, \div(\ka_g\nb T_g), \label{eq1.2simplified-c}\\
p_g =&\, \Rg T_g \rho_g , \label{eq1.2simplified-d}\\
s =&\, c_v \log\bke{\dfrac{p_g}{\rho_g^\ga} } \label{eq1.2simplified-e},
\end{empheq}
\end{subequations}
and
 \begin{subequations}\label{eq1.3simplified}
\begin{empheq}[right=\empheqrbrace\text{on $\pd\Om(t)$, $t>0$,}]{align}
{\bf v}_l(\boldsymbol{\om},t)\cdot\hat{\bf n} = {\bf v}_g(\boldsymbol{\om},t)\cdot\hat{\bf n} = \pd_t{\boldsymbol{\om}}\cdot\hat{\bf n}, \label{eq1.3simplified-a}\\
p_g \hat{\bf n} - p_l \hat{\bf n} + 2\mu_l\hat{\bf n}\cdot\mathbb{D}({\bf v}_l) = \si \hat{\bf n} (\nb_S\cdot \hat {\bf n}), \label{eq1.3simplified-b}\\
T_g = T_\infty, \label{eq1.3simplified-c}
\end{empheq}
\end{subequations}
Here, $\hat{\bf n}$ is the unit outer normal on $\pd\Om(t)$, $\mathbb{D}({\bf v}_l) = (\nb{\bf v}_l+\nb{\bf v}_l^\top)/2$ is the deformation tensor of the liquid, and $\nb_S\cdot$ denotes the surface divergence so that $\nb_S\cdot\hat{\bf n}$ is twice the mean curvature on the surface.

The system depends on the following physical parameters:~the density of the liquid $\rho_l>0$, the dynamic viscosity of the liquid $\mu_l\ge0$, the thermal conductivity of the gas $\ka_g\ge0$, the surface tension $\si$, far-field liquid temperature $T_\infty>0$, the specific gas constant $\Rg>0$, the heat capacity of the gas at constant volume $c_v>0$, and the adiabatic constant $\ga>1$. Here,  $\Rg$, $c_v$, and $\ga$ are related by $\ga = 1 + \frac{\Rg}{c_v}$.

The model \eqref{eq1.1simplified}-\eqref{eq1.3simplified} has been studied extensively in Prosperetti \cite{Prosperetti-JFM1991}, Biro--Vel\'azquez \cite{bv-SIMA2000}, and the authors in \cite{LW-vbas2022} in the setting of spherically symmetric solutions.

In this paper, we discuss the nonspherical dynamics in model \eqref{eq1.1simplified}--\eqref{eq1.3simplified}.
Note that one can eliminate $T_g$ and $s$ use $p=p_g(t)$ to simplify 
the equations of the gas \eqref{eq1.2simplified} to (see \cite[(B.6)]{LW-vbas2022})
%\begin{subequations}
%\begin{empheq}[right=\empheqrbrace\text{in $\Om(t)$, $t>0$.}]{align}
%\pd_t \rho_g + \div(\rho_g{\bf v}_g) =&\, 0,\qquad\qquad p_g = p_g(t),\notag\\
%\pd_t \rho_g =&\, \frac{\ka}{\ga c_v} \De \log\rho_g - \frac{\ka}{\ga c_v} \frac{|\nb\rho_g|^2}{\rho_g^2} - {\bf v}_g\cdot\nb\rho_g + \frac{\pd_tp_g}{\ga p_g} \rho_g, \notag
%\end{empheq}
%\end{subequations}
Using the relation \eqref{eq1.2simplified-e} among entropy, gas density and gas pressure,  the approximate model \eqref{eq1.1simplified}--\eqref{eq1.3simplified} reduces
 \begin{subequations}\label{eq1.1simplified-red}
\begin{empheq}[right=\empheqrbrace\text{in $\R^3\setminus \Om(t)$, $t>0$,}]{align}
\pd_t {\bf v}_l + {\bf v}_l\cdot\nb{\bf v}_l =&\, \frac{\mu_l}{\rho_l}\De{\bf v}_l - \dfrac1{\rho_l}\, \nb p_l, \label{eq1.1simplified-red-a}\\
\div {\bf v}_l =&\, 0, \label{eq1.1simplified-red-b}
\end{empheq}
\end{subequations}

\begin{subequations}\label{eq1.2simplified-red}
\begin{empheq}[right=\empheqrbrace\text{in $\Om(t)$, $t>0$.}]{align}
&\pd_t \rho_g + \div(\rho_g{\bf v}_g) = 0,\qquad p_g = p_g(t),\label{eq1.2simplified-red-a}\\
&\pd_t \rho_g = \frac{\ka}{\ga c_v} \De \log\rho_g - \frac{\ka}{\ga c_v} \frac{|\nb\rho_g|^2}{\rho_g^2} - {\bf v}_g\cdot\nb\rho_g + \frac{\pd_tp_g}{\ga p_g} \rho_g, \label{eq1.2simplified-red-c}
\end{empheq}
\end{subequations}
and
 \begin{subequations}\label{eq1.3simplified-red}
\begin{empheq}[right=\empheqrbrace\text{on $\pd\Om(t)$, $t>0$,}]{align}
{\bf v}_l(\boldsymbol{\om},t)\cdot\hat{\bf n} = {\bf v}_g(\boldsymbol{\om},t)\cdot\hat{\bf n} = \pd_t{\boldsymbol{\om}}\cdot\hat{\bf n}, \label{eq1.3simplified-red-a}\\
p_g \hat{\bf n} - p_l \hat{\bf n} + 2\mu_l\hat{\bf n}\cdot\mathbb{D}({\bf v}_l) = \si \hat{\bf n} (\nb_S\cdot \hat {\bf n}), \label{eq1.3simplified-red-b}\\
p_g = \Rg T_\infty\rho_g, \label{eq1.3simplified-red-c}
\end{empheq}
\end{subequations}
The system \eqref{eq1.1simplified-red}--\eqref{eq1.3simplified-red} is supplemented by 
the initial conditions
\EQ{
{\bf v}_l(\cdot,0),\, \rho_g(\cdot,0),\, \Om(0).
}
At spatial infinity, we require the following far-field conditions: 
\EQ{\label{eq-far-field-all}
\max_{|x|=r}\,{\bf v}_l(x,t) = O(r^{-2}),\qquad
\lim_{|x|\to\infty} \nb{\bf v}_l(x,t) = \mathbb O,\qquad
\lim_{|x|\to\infty} p_l(x,t) = p_{\infty,*},
}
where $p_{\infty,*}>0$ is the far-field pressure. The  far-field conditions \eqref{eq-far-field-all} are motivated the spherically symmetric equilibrium 
${\bf v}_l = (C/r^2)\,\hat{\bf r}$, $r=|x|$ and $\hat{\bf r} = x/|x|$.
Moreover, the spatial decay conditions \eqref{eq-far-field-all} are required for establishing the spherical symmetry of equilibrium bubble in \cite[Proposition 4.3.\,(1)]{LW-vbas2022}.

\begin{remark}\label{rmk-far-field} 
We remark on a property of solutions which exhibit more rapid spatial decay. 
Suppose, for example, that  ${\bf v}_l = o(|x|^{-2})$, so that  $\lim_{|x|\to\infty} |x|^2 |{\bf v}_l(x,t)| = 0$. Then, we claim that the bubble volume, $|\Omega(t)|$, 
is independent of time.   Indeed, we calculate
\[
0 = \int_{\R^3\setminus\Om(t)} \div{\bf v}_l\, dx 
= \lim_{r\to\infty} \int_{\pd B_r} {\bf v}_l\cdot\hat{\bf n}_{\pd B_r}\, dS - \int_{\pd\Om(t)} {\bf v}_l\cdot\hat{\bf n}\, dS
= - \int_{\pd\Om(t)} \pd_t{\boldsymbol{\om}}\cdot\hat{\bf n}\, dS
= - \frac{d}{dt} |\Om(t)|,
\]
This holds in addition to the conservation of the bubble mass; see \cite[Proposition 7.3]{LW-vbas2022}.
Note that, in the radial case, we have that $|\Om(t)| = (4\pi/3) R^3(t)$.
So, the conservation of the bubble volume implies $R(t) \equiv$ constant.
Thus, by \cite[(5.1c)]{LW-vbas2022}, we have $\rho_g(R,t) = \frac1{\Rg T_\infty}\bkt{p_{\infty,*} + \frac{2\si}{R}}$.
Differentiating the equation with respect to $t$, we have $\pd_t\rho_g(R,t) = 0$, which implies $\pd_tp_g(t) = \Rg T_\infty\pd_t\rho_g(R,t) = 0$.
Hence, the solution is an equilibrium.
\end{remark}

\section{Characterization of equilibrium bubble shapes in an irrotational fluid}
Consider  approximate model \eqref{eq1.1simplified-red}--\eqref{eq-far-field-all} in the irrotational framework, {\it i.e.} $\nabla\times{\bf v}_l=0$. In this setting,
we show that the surface tension alone (without the liquid viscosity $\mu_l$) is sufficient to constrain the shape of any equilibrium bubble to be  spherical.
%Moreover, we prove that the viscosity $\mu_l$ is incompatible with the irrotational assumption.

Since $\nabla\times{\bf v}_l=0$, we may introduce liquid and gas velocity potentials: ${\bf v}_l = \nb\phi_l$ and ${\bf v}_g = \nb\phi_g$.
Then the approximate model \eqref{eq1.1simplified-red}--\eqref{eq-far-field-all} becomes
 \begin{subequations}\label{eq1.1simplified-red-irro}
\begin{empheq}[right=\empheqrbrace\text{in $\R^3\setminus \Om(t)$, $t>0$,}]{align}
\pd_t\phi_l + \frac{|\nb\phi_l|^2}2 =&\, -\frac{p_l-p_{\infty,*}}{\rho_l}, \label{eq1.1simplified-red-irro-a}\\
\De\phi_l =&\, 0,\quad 
%\crb{-\De p_l = \rho_l (D^2\phi_l)^\top\cdot D^2\phi_l,} 
\label{eq1.1simplified-red-irro-b}
\end{empheq}
\end{subequations}
%\crb{where $(D^2\phi_l)_{ij} = \pd_i\pd_j\phi_l$ is the Hessian of $\phi_l$,}
\begin{subequations}\label{eq1.2simplified-red-irro}
\begin{empheq}[right=\empheqrbrace\text{in $\Om(t)$, $t>0$.}]{align}
&\pd_t \rho_g + \rho_g\De\phi_g + \nb\rho_g\cdot\nb\phi_g = 0,\qquad p_g = p_g(t),\label{eq1.2simplified-red-irro-a}\\
&\pd_t \rho_g = \frac{\ka}{\ga c_v} \De \log\rho_g - \frac{\ka}{\ga c_v} \frac{|\nb\rho_g|^2}{\rho_g^2} - \nb\phi_g\cdot\nb\rho_g + \frac{\pd_tp_g}{\ga p_g} \rho_g, \label{eq1.2simplified-red-irro-c}
\end{empheq}
\end{subequations}
and
 \begin{subequations}\label{eq1.3simplified-red-irro}
\begin{empheq}[right=\empheqrbrace\text{on $\pd\Om(t)$, $t>0$,}]{align}
\nb\phi_l(\boldsymbol{\om},t)\cdot\hat{\bf n} = \nb\phi_g(\boldsymbol{\om},t)\cdot\hat{\bf n} = \pd_t{\boldsymbol{\om}}\cdot\hat{\bf n}, \label{eq1.3simplified-red-irro-a}\\
p_g \hat{\bf n} - p_l \hat{\bf n} + 2\mu_l\hat{\bf n}\cdot[D^2\phi_l] = \si \hat{\bf n} (\nb_S\cdot \hat {\bf n}), \label{eq1.3simplified-red-irro-b}\\
p_g = \Rg T_\infty\rho_g, \label{eq1.3simplified-red-irro-c}
\end{empheq}
\end{subequations}
where $[D^2\phi_l]$ is the Hessian of $\phi_l$,
with the initial conditions
\EQ{
\phi_l(\cdot,0),\, \rho_g(\cdot,0),\, \Om(0)={\boldsymbol{\om}}(\mathbb S^2,t=0),
}
and the far-field conditions 
\EQ{\label{eq-far-field-all-irro}
%\lim_{|x|\to\infty} \phi_l(x,t) = O(|x|^{-1}),\qquad
\max_{|x|=r}\, \nb\phi_l(x,t) = O(r^{-2}),\qquad
\lim_{|x|\to\infty} D^2\phi_l(x,t) = \mathbb O,\qquad
\lim_{|x|\to\infty} p_l(x,t) = p_{\infty,*}.
}

As discussed in \cite[Proposition 4.3]{LW-vbas2022}, the approximate system \eqref{eq1.1simplified-red}--\eqref{eq-far-field-all} admits a family of spherically symmetric equilibria, parameterized by the bubble mass, $M$.
The system \eqref{eq1.1simplified-red-irro}-\eqref{eq-far-field-all-irro}, which arises from the irrotationality assumption  admits the same family of equilibria:\\
\begin{subequations}
\label{eq-equilibrium}
\begin{align}
\phi_{l,*} &= c_1,\qquad\qquad 
p_{l,*} = p_{\infty,*} ,\qquad\qquad
\Om_* = B_{_{R_*[M]}}, \label{eq-equilibrium-a}\\
\rho_{g,*}[M] &= \frac1{\Rg T_\infty} \bke{ p_{\infty,*} +\frac{2\si}{R_*[M]} },\quad
\phi_{g,*} = c_2,\quad 
p_{g,*}[M] = p_{\infty,*} + \frac{2\si}{R_*[M]},\label{eq-equilibrium-b}\\
T_{g,*} &= T_{l,*} = T_\infty,\quad
s_* = c_v\log\bke{(\Rg T_\infty)^\ga \bke{ p_{\infty,*} +\frac{2\si}{R_*[M]}}^{1-\ga}},\label{eq-equilibrium-c}
\end{align}
\end{subequations}
where $c_1$, $c_2$ are constants,
\EQ{\label{eq-mass-converve-ss}
M := \int_{\Om_*} \rho_{g,*}\, dx  > 0,
}
and $R_* = R_*[M]$ is the unique positive solution to the cubic equation
\EQ{\label{eq-cubic}
p_{\infty,*} R_*^3 + 2\si R_*^2 - \frac{3\Rg T_\infty M}{4\pi} = 0.
}

In \cite[Proposition 4.3]{LW-vbas2022}, we proved that the surface tension and the viscosity constrain any equilibrium bubble of the approximate model \eqref{eq1.1simplified-red}--\eqref{eq-far-field-all}  to be of spherical shape.
The following theorem shows that, in the irrotational framework, surface tension is sufficient to ensure that the equilibrium bubble is spherical; viscosity is not required for this conclusion.

\begin{theorem}[Characterization equilibrium bubbles]\label{thm-irro-equilib}
Assume $\si\neq0$ and $\mu_l\ge0$, allowing $\mu_l=0$.
Then any regular equilibrium solution of the reduced irrotational system \eqref{eq1.1simplified-red-irro}--\eqref{eq-far-field-all-irro} is uniquely determined by its bubble mass $M$ and is given by \eqref{eq-equilibrium}.
In particular, irrotational equilibrium bubbles of the approximate system \eqref{eq1.1simplified-red}--\eqref{eq-far-field-all} are spherical.
\end{theorem}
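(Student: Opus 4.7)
The overall strategy is to first show that, at equilibrium, the liquid is at rest with uniform pressure $p_{\infty,*}$; then the normal stress balance collapses to the Young--Laplace equation and forces $\pd\Om$ to be a closed surface of constant mean curvature. Alexandrov's theorem then concludes that $\pd\Om$ must be a round sphere, after which the gas equations and the bubble-mass constraint \eqref{eq-mass-converve-ss} pin down the remaining unknowns and reproduce the family \eqref{eq-equilibrium}.

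\textbf{Liquid side.} At equilibrium $\pd_t\boldsymbol{\om}=0$, so the kinematic condition \eqref{eq1.3simplified-red-irro-a} gives $\nb\phi_l\cdot\hat{\bf n}=0$ on $\pd\Om$. Combined with $\De\phi_l=0$ in $\R^3\setminus\Om$ and the decay $\nb\phi_l=O(|x|^{-2})$ from \eqref{eq-far-field-all-irro} (which implies $\phi_l\to c_\infty$ at infinity), I would integrate by parts on $B_R\setminus\Om$ and send $R\to\infty$; the boundary term on $\pd B_R$ is $O(|x|^{-3})\cdot O(|x|^2) \to 0$, yielding $\int_{\R^3\setminus\Om}|\nb\phi_l|^2\,dx=0$. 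Hence $\phi_l\equiv c_1$ and in particular $\nb\phi_l\equiv 0$, $[D^2\phi_l]\equiv 0$. Bernoulli's equation \eqref{eq1.1simplified-red-irro-a} then yields $p_l\equiv p_{\infty,*}$. Substituting into the stress boundary condition \eqref{eq1.3simplified-red-irro-b} annihilates the viscous term (even when $\mu_l>0$) and leaves the Young--Laplace equation
\[
p_g - p_{\infty,*} \;=\; \si\,(\nb_S\cdot\hat{\bf n})\quad\text{on }\pd\Om.
\]
Since $p_g=p_g(t)$ is spatially uniform and $\si\ne 0$, the mean curvature is constant on the compact embedded surface $\pd\Om$. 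Alexandrov's theorem then forces $\pd\Om$ to be a round sphere, so (after translation) $\Om_*=B_{R_*}$ for some $R_*>0$.

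\textbf{Gas side and uniqueness.} For the gas, the continuity equation \eqref{eq1.2simplified-red-irro-a} at equilibrium reads $\nb\cdot(\rho_g\nb\phi_g)=0$ in $\Om_*$ with $\rho_g\nb\phi_g\cdot\hat{\bf n}=0$ on $\pd\Om_*$ by \eqref{eq1.3simplified-red-irro-a}; multiplying by $\phi_g$ and integrating gives $\int_{\Om_*}\rho_g|\nb\phi_g|^2\,dx=0$, hence $\nb\phi_g\equiv 0$. The reduced density equation \eqref{eq1.2simplified-red-irro-c} at equilibrium then reduces to $\De\log\rho_g = |\nb\log\rho_g|^2$, which, after the substitution $v=1/\rho_g$, is equivalent to $\De v = 0$ in $\Om_*$. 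The boundary condition \eqref{eq1.3simplified-red-irro-c} combined with $p_g$ spatially uniform forces $\rho_g$ (and hence $v$) to be constant on $\pd\Om_*$, so the maximum principle propagates this to the interior and $\rho_g\equiv$ const. Finally, evaluating $\rho_{g,*}=(p_{\infty,*}+2\si/R_*)/(\Rg T_\infty)$ (from the boundary Laplace law and the boundary equation of state) and inserting into \eqref{eq-mass-converve-ss} with $|\Om_*|=(4\pi/3)R_*^3$ delivers the cubic \eqref{eq-cubic}, whose positive root $R_*[M]$ is unique by the monotonicity of the left-hand side in $R_*$. This reproduces the family \eqref{eq-equilibrium}.

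\textbf{Main obstacle.} The substantive ingredient is the step invoking Alexandrov's theorem: one must verify that the hypotheses (closed, embedded, $C^2$ surface) are encoded in the notion of a ``regular equilibrium solution'' and in the setup of the bubble as the boundary of a bounded simply-connected region. The remaining ingredients are standard energy identities plus a maximum-principle argument; the only mild technical care needed is in matching the far-field decay \eqref{eq-far-field-all-irro} to the integration by parts at infinity.
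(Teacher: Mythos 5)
Your proposal is correct and follows essentially the same route as the paper: an energy identity (applied, as you should make explicit, to $\phi_l-c_\infty$ rather than $\phi_l$ itself, since $\phi_l\nb\phi_l$ alone is only $O(R^{-2})$ on $\pd B_R$ and the paper devotes most of its effort to justifying that this normalized boundary term vanishes) forces the liquid to rest, Bernoulli plus the stress balance reduce to Young--Laplace, Alexandrov gives the sphere, and the gas is handled by the analogous energy identity together with the identity $\De\log\rho_g-|\nb\rho_g|^2/\rho_g^2=-\rho_g\De(1/\rho_g)$. The only deviation is your final step: you conclude $\rho_g\equiv\mathrm{const}$ by the maximum principle for the harmonic function $1/\rho_g$ with constant Dirichlet data, whereas the paper integrates $\De(1/\rho_{g,*})=0$ over $\Om_*$ and then integrates the original equation by parts a second time; your shortcut is valid and slightly more direct.
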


\begin{remark}\label{rem:rotational-examples}
In contrast to Theorem \ref{thm-irro-equilib} we show, in a forthcoming article \cite{LW-aeb}, that if we allow flows with non-trivial vorticity then,  we can construct non-spherically symmetric equilibrium solutions of the system \eqref{eq1.1simplified-red}--\eqref{eq1.3simplified-red}.
%XYZ.
\end{remark}

\begin{proof} Setting time-derivatives equal to zero, we obtain that
 steady-state solutions of  \eqref{eq1.1simplified-red-irro}--\eqref{eq-far-field-all-irro} solve
 \begin{subequations}\label{eq1.1simplified-equilib}
\begin{empheq}[right=\empheqrbrace\text{in $\R^3\setminus \Om_*$,}]{align}
\frac{|\nb\phi_{l,*}|^2}2 =&\, -\frac{p_{l,*} - p_{\infty,*}}{\rho_l}, \label{eq1.1simplified-a-equilib}\\
\De\phi_{l,*} =&\, 0, \label{eq1.1simplified-b-equilib}
\end{empheq}
\end{subequations}
Note that since $p_g=p_g(t)$, we have $p_{g*}$ is a constant.
 \begin{subequations}\label{eq1.2simplified-equilib}
\begin{empheq}[right=\empheqrbrace\text{in $\Om_*$,}]{align}
&\rho_{g,*}\De\phi_{g,*} + \nb\rho_{g,*}\cdot\nb\phi_{g,*} =0\, \label{eq1.2simplified-a-equilib}\\
& \frac{\ka}{\ga c_v}\De\log\rho_{g,*} - \frac{\ka}{\ga c_v} \frac{|\nb\rho_{g,*}|^2}{\rho_{g,*}^2} - \nb\phi_{g,*}\cdot\nb\rho_{g,*} = 0, \label{eq1.2simplified-b-equilib}
\end{empheq}
\end{subequations}
with interface conditions:
 \begin{subequations}\label{eq1.3simplified-equilib}
\begin{empheq}[right=\empheqrbrace\text{on $\pd\Om_*$,}]{align}
\nb\phi_{l,*}\cdot\hat{\bf n} = \nb\phi_{g,*}\cdot\hat{\bf n} = 0, \label{eq1.3simplified-a-equilib}\\
p_{g,*} \hat{\bf n} - p_{l,*} \hat{\bf n} + 2\mu_l \hat{\bf n} \cdot[D^2\phi_{l,*}] = \si \hat{\bf n} (\nb_S\cdot \hat {\bf n}), \label{eq1.3simplified-b-equilib}\\
p_{g,*} = \Rg T_\infty\rho_{g,*} \label{eq1.3simplified-c-equilib}
\end{empheq}
\end{subequations}
and the far-field velocity and pressure conditions:
\EQ{\label{eq-far-field-pressure-equilib}
\max_{|x|=r}\, \big|\nb\phi_{l,*}(x)\big| = O(r^{-2}),\qquad
\lim_{|x|\to\infty} D^2\phi_{l,*}(x) = \mathbb{O},\qquad
\lim_{|x|\to\infty} p_{l,*}(x) = p_{\infty,*}.
}

Let $\td\phi_{l,*} = \phi_{l,*} + {\bf c}$, where ${\bf c}$ is some constant vector to be determined.
Then $\td\phi_{l,*}$ is harmonic and satisfies  \eqref{eq1.3simplified-a-equilib}, \eqref{eq-far-field-pressure-equilib}.
Thus, \EQN{
0 &= \int_{\R^3\setminus\Om_*} \De\td\phi_{l,*}\cdot\td\phi_{l,*}\, dx \\
&= - \int_{\R^3\setminus\Om_*} |\nb\td\phi_{l,*}|^2\, dx + \lim_{r\to\infty} \int_{\pd B_r} \td\phi_{l,*}\nb\td\phi_{l,*}\cdot\hat{\bf n}_{\pd B_r}\, dS - \int_{\pd\Om_*} \td\phi_{l,*}\nb\td\phi_{l,*}\cdot\hat{\bf n}\, dS
}
where 
$\hat{\bf n}_{\pd B_r}$ and $\hat{\bf n}$ are the unit outward normals on $\pd B_r$ and $\Om_*$, respectively, and the last term vanishes since $\td\phi_{l,*}$ satisfies \eqref{eq1.3simplified-a-equilib}.
%\footnote{\textcolor{red}{minor point: given the signs above there's a convention we're using about the normal vectors -- maybe comment briefly here or in a section on notation and conventions}
%}
Therefore,
\EQ{\label{eq-equib-energy-tilde}
\int_{\R^3\setminus\Om_*} |\nb\td\phi_{l,*}|^2\, dx = \lim_{r\to\infty} \int_{\pd B_r} \td\phi_{l,*}\nb\td\phi_{l,*}\cdot\hat{\bf n}_{\pd B_r}\, dS.  
}

We claim that, for an appropriate choice of ${\bf c}$, right hand side of \eqref{eq-equib-energy-tilde} vanishes and hence $\nb\td\phi_{l,*}\equiv{\bf 0}$ on $\mathbb{R}^3\setminus\Om_*$.
  Our goal is to prove $\nb\td\phi_{l,*}\equiv{\bf 0}$, with a suitable choice of ${\bf c}$, by showing that the limit on the right hand side is zero.

We now discuss our choice of ${\bf c}$.
Fix $r_0>0$ such that $\overline{\Om_*}\subset\subset B_{r_0}(0)$. For example,  in spherical coordinates $(r,\th,\varphi)\in\R^3\setminus\Om_*$ for all $r\ge r_0$. 
For any $r\ge r_0$,
\[
\phi_{l,*}(r,0,0) = \phi_{l,*}(r_0,0,0) + \int_{r_0}^r \pd_{r'}\phi_{l,*}(r',0,0)\, dr' .
\]
By \eqref{eq-far-field-pressure-equilib}, $\abs{\pd_{r'}\phi_{l,*}(r',0,0)}\le r'^{-2}\in L^1_{r'}(r_0,\infty)$. It follows that  $\int_{r_0}^\infty \pd_{r'}\phi_{l,*}(r',0,0)\, dr'$  has a limit and hence
 the limit $\lim_{r\to\infty} \phi_{l,*}(r,0,0)$ exists.
Define ${\bf c} = \lim_{r\to\infty} \phi_{l,*}(r,0,0)$. Hence, $\td\phi_{l,*}(r,0,0)\to0$ as $r\to\infty$.

Next, note that from \eqref{eq-equib-energy-tilde} we have
\begin{align}\label{eq-equib-energy-tilde-upper}
\int_{\R^3\setminus\Om_*} |\nb\td\phi_{l,*}|^2\, dx &\le \limsup_{r\to\infty} \int_{\pd B_r} |\td\phi_{l,*}|\ |\nb\td\phi_{l,*}| dS\\
&\le  \limsup_{r\to\infty} \max_{|x|=r} |\td\phi_{l,*}|\times  \limsup_{r\to\infty} \max_{|x|=r} \left( r^2\ |\nb\td\phi_{l,*}|\right)\nonumber
\end{align}
The latter factor is bounded by the far-field bound \eqref{eq-far-field-pressure-equilib}, and so it suffices to prove that \[\lim_{r\to\infty} \max_{|x|=r}\ |\td\phi_{l,*}(x)| = 0.\]

 In view of  $\nb\td\phi_{l,*} = \pd_r\td\phi_{l,*}\hat{\bf r} + \frac1r\pd_\th\td\phi_{l,*}\hat{\boldsymbol{\th}} + \frac1{r\sin\th}\pd_\varphi\td\phi_{l,*}\hat{\boldsymbol{\varphi}}$ and 
 \eqref{eq-far-field-pressure-equilib}, we have for some $C>0$, independent of $\th$ and $\varphi$, that
 \begin{subequations}
\begin{empheq}{align}
&\abs{\pd_r\td\phi_{l,*}(r,\th,\varphi)}\le\frac{C}{r^2},\label{eq-pdr-phil-decay}\\
&\abs{\frac1r\pd_\th\td\phi_{l,*}(r,\th,\varphi)} \le \frac{C}{r^2},\text{ which implies } \abs{\pd_\th\td\phi_{l,*}(r,\th,\varphi)} \le\frac{C}r,\ \text{ and, }\label{eq-pdth-phil-decay}\\
&\abs{\frac1{r\sin\th}\pd_\varphi\td\phi_{l,*}(r,\th,\varphi)} \le \frac{C}{r^2},\text{ which implies } \abs{\pd_\varphi\td\phi_{l,*}(r,\th,\varphi)}\le\frac{C}r.\label{eq-pdphi-phil-decay}
\end{empheq}
\end{subequations}
Using the fundamental theorem for linear integral,
\[
\td\phi_{l,*}(r,\th,\varphi) = \td\phi_{l,*}(r,0,0) + \int_{\mathcal{C}} \nb_{\th,\varphi}\td\phi_{l,*}(r,\th',\varphi')\cdot d\ell_{\th',\varphi'},
\]
where $\mathcal{C}$ is a curve of finite length connecting $(0,0)$ and $(\th,\varphi)$.
Thus, by using \eqref{eq-pdth-phil-decay}, \eqref{eq-pdphi-phil-decay} and the fact that $\lim_{r\to\infty}\td\phi_{l,*}(r,0,0) = 0$, we have 
\[
\abs{ \td\phi_{l,*}(r,\th,\varphi) } \le \abs{ \td\phi_{l,*}(r,0,0)} + \frac{C \cdot \text{length}(\mathcal{C}) }r\to0,\ \text{ as }r\to\infty.
\]
Consequently, we have $\lim_{|x|\to\infty} \td\phi_{l,*}(x) = 0$ so that the limit on the right hand side of \eqref{eq-equib-energy-tilde} is zero, which implies $\nb\phi_{l,*} = \nb\td\phi_{l,*}\equiv{\bf 0}$
on $\mathbb R^3\setminus \Om_*$. %\textcolor{red}{Hence, $\nb\phi_{l,*}=0$  all the way up to $\partial \Omega_*$ by unique continuation?}

Since $\nb\phi_{l,*} = {\bf 0}$, \eqref{eq1.1simplified-a-equilib} implies $p_{l,*} = p_{\infty,*}$ is constant.
Moreover, since $[D^2\phi_{l,*}]=\mathbb{O}$, the stress balance equation \eqref{eq1.3simplified-b-equilib} becomes
\[
p_{g,*} - p_{l,*} = \si (\nb_S\cdot \hat {\bf n})\ \text{ on }\pd\Om_*.
\] 
Since $p_{g,*}$ and $p_{l,*}$ are both constant, $\pd\Om_*$ is a closed constant-mean-curvature surface.
By Alexandrov's Theorem \cite{Alexandrov-AMPA1962}, $\Om_*$ must be a sphere.

We now deal with the system \eqref{eq1.2simplified-equilib} for the gas.
Multiplying \eqref{eq1.2simplified-a-equilib} by $\phi_{g,*}$ and then integrating the equation over $\Om_*$, we obtain
\EQN{
0 = \int_{\Om_*} \rho_{g,*} (\De\phi_{g,*}) \phi_{g,*}\, dx + \int_{\Om_*} (\nb\rho_{g,*}\cdot\nb\phi_{g,*})\phi_{g,*}\, dx
= - \int_{\Om_*} \rho_{g,*}|\nb\phi_{g,*}|^2\, dx,
}
where we've used the integration by parts formula and the boundary condition \eqref{eq1.3simplified-a-equilib}.
This implies that $\nb\phi_{g,*} \equiv {\bf 0}$.
Thus, by \eqref{eq1.2simplified-b-equilib}, we have 
\EQ{\label{eq1.2simplified-b-equilib-pf}
0 = \De\log\rho_{g,*} - \frac{|\nb\rho_{g,*}|^2}{\rho_{g,*}^2} = -\rho_{g,*}\De\bke{\frac1{\rho_{g,*}}},
}
which yields that $\De\bke{\frac1{\rho_{g,*}}} = 0$.
Hence, since $\rho_{g,*}|_{\pd\Om_*}$ is a constant by \eqref{eq1.3simplified-c-equilib},
\[
0 = \int_{\Om_*} \De\bke{\frac1{\rho_{g,*}}}\, dx = \int_{\pd\Om_*} \nb\bke{\frac1{\rho_{g,*}}}\cdot\hat{\bf n}\, dS = -\frac1{ (\rho_{g,*} |_{\pd\Om_*})^2} \int_{\pd\Om_*} \nb\rho_{g,*}\cdot\hat{\bf n}\, dS,
\]
implying $\int_{\pd\Om_*} \nb\rho_{g,*}\cdot\hat{\bf n}\, dS = 0$.
Therefore, integrating \eqref{eq1.2simplified-b-equilib-pf} over $\Om_*$ implies
\EQN{
0 &= \int_{\Om_*} \De\log\rho_{g,*}\, dx - \int_{\Om_*} \frac{|\nb\rho_{g,*}|^2}{\rho_{g,*}^2}\, dx
= \int_{\pd\Om_*} \nb\log\rho_{g,*}\cdot\hat{\bf n}\, dS - \int_{\Om_*} \frac{|\nb\rho_{g,*}|^2}{\rho_{g,*}^2}\, dx\\
&= \frac1{\rho_{g,*}|_{\pd\Om_*}} \int_{\pd\Om_*} \nb\rho_{g,*}\cdot\hat{\bf n}\, dS - \int_{\Om_*} \frac{|\nb\rho_{g,*}|^2}{\rho_{g,*}^2}\, dx
= - \int_{\Om_*} \frac{|\nb\rho_{g,*}|^2}{\rho_{g,*}^2}\, dx,
}
which implies $\rho_{g,*}=$ constant. 
As discussed in the proof of \cite[Proposition 4.3]{LW-vbas2022}, the constant $\rho_{g,*}$ is given by \cite[(4.14a)]{LW-vbas2022} and is determined by the equilibrium radius $R_*$ and the bubble mass $M$, where $R_* = R_*[M]$ is the unique positive solution to the cubic equation \eqref{eq-cubic}.
This completes the proof of the theorem.
\end{proof}

\section{Linearized perturbation dynamics about a spherically symmetric equilibrium}
\label{sec:linearized-dynamics}

In this section we derive the linearized evolution equations which govern infinitesimal perturbations of an equilibrium.
 We expand \eqref{eq1.1simplified-red-irro}--\eqref{eq-far-field-all-irro} around any fixed equilibrium in \eqref{eq-equilibrium}: 
\EQ{\label{eq-linearize-old}
\phi_l &= c_1 + \de \Phi_l + O(\de^2),\qquad\quad
p_l = p_{\infty,*} + \de \mathcal P_l + O(\de^2),\\
\boldsymbol{\om}(\th,\varphi,t) 
&= R(\th,\varphi,t) \hat{\bf r} 
= \bkt{R_* + \de \mathcal R(\th,\varphi,t) + O(\de^2) } \hat{\bf r},\quad |\hat{\bf r}|=1, \\
\hat{\bf n} &= \hat{\bf r} + O(\de),\qquad
\nb_S\cdot\hat{\bf n} = \frac2{R_*} - \de \frac1{R_*^2} (2+\De_S)\mathcal R + O(\de^2)\quad \text{(see e.g. \cite[(C.27)]{SW-SIMA2011})},
\\
\rho_g &= \rho_* + \de\varrho + O(\de^2),\qquad\quad
\phi_g = c_2 + \de \Phi_g + O(\de^2),\qquad\quad
p_g = p_* + \de \mathcal P_g + O(\de^2).
}
%\footnote{\textcolor{red}{can we reference this expansion?}}
%
Retaining only terms which are of $O(\de)$ and making the change of the variables $x=R_*y$, we derive the linearized system
 for 
 \[ \Phi_l(y,t),\ \mathcal{P}_l(y,t),\ \Phi_g(y,t),\ \varrho(y,t),\ \mathcal{P}_g(t):\]
 \begin{subequations}\label{eq1.1modified-lin-irro-0}
\begin{empheq}[right=\empheqrbrace\text{in $\R^3\setminus B_1$, $t>0$,}]{align}
\pd_t \Phi_l =& - \dfrac1{\rho_*R_*}\, \mathcal P_l, \label{eq1.1modified-a-lin-irro-0}\\
\De_y\Phi_l =&\, 0,\quad \label{eq1.1modified-b-lin-irro-0}
%\crb{-\De\mathcal{P}_l = 0,} 
\end{empheq}
\end{subequations}

 \begin{subequations}
\begin{empheq}[right=\empheqrbrace\text{in $B_1$, $t>0$.}]{align}
\pd_t \varrho + \frac{\rho_*}{R_*} \De_y\Phi_g =&\, 0,\qquad \mathcal{P}_g = \mathcal{P}_g(t),\label{eq1.2modified-a-lin-irro-0}\\
\pd_t \varrho =&\, \dfrac{\ka}{\ga c_v} \dfrac1{\rho_*R_*^2} \De_y \varrho + \dfrac{\rho_*}{\ga p_*}\pd_t\mathcal P_g\label{eq1.2modified-b-lin-irro-0}, 
\end{empheq}
\end{subequations}

For the boundary conditions \eqref{eq1.3simplified-red-irro}, it is clear that \eqref{eq1.3simplified-red-irro-a} and \eqref{eq1.3simplified-red-irro-c} are linearized to $\pd_r\Phi_l = \pd_r\Phi_g = \pd_t\mathcal R$ and $\mathcal P_g = \Rg T_\infty \varrho$, respectively.
For the stress balance equation \eqref{eq1.3simplified-red-irro-b}, we obtain the linearized stress balance equation
\EQN{
\mathcal P_g\hat{\bf r} - \mathcal P_l\hat{\bf r} + 2\mu_l\hat{\bf r}\cdot[D^2_y\Phi_l] = -\frac{\si}{R_*^2}(2+\De_S)\mathcal R\hat{\bf r},\qquad \text{ on } \pd B_1.
}
We compute
\EQN{
\hat{\bf r}\cdot[D_y^2\Phi_l] &= \hat{\bf r}\cdot\nb_y (\nb_y\Phi_l) 
= \pd_r\bke{\nb_y\Phi_l}
= \pd_r\bke{\pd_r\Phi_l\, \hat{\bf r} + \frac1r\pd_\th\Phi_l\, \hat{\boldsymbol{\th}} + \frac1{r\sin\th}\pd_\varphi\Phi_l\, \hat{\boldsymbol{\varphi}} }\\
&= \pd_r^2\Phi_l\, \hat{\bf r} + \bke{-\frac1{r^2}\pd_\th\Phi_l + \frac1r\pd_r\pd_\th\Phi_l} \hat{\boldsymbol{\th}} + \bke{-\frac1{r^2\sin\th}\pd_\varphi\Phi_l + \frac1{r\sin\th}\pd_r\pd_\varphi\Phi_l} \hat{\boldsymbol{\varphi}}.
}
Then the radial component of the linearized stress balance equation reads
\EQN{
\mathcal P_g(t) - \mathcal P_l\big|_{\pd B_1} + \frac{2\mu_l}{R_*^2} \pd_r^2\Phi_l\big|_{\pd B_1} = -\frac{\si}{R_*^2}(2+\De_S)\mathcal R,
}
and, if $\mu_l>0$, the tangential ($\hat{\boldsymbol{\th}}$ and $\hat{\boldsymbol{\varphi}}$) components of the equation yields
\[
\pd_\th\Phi_l = \pd_r\pd_\th\Phi_l,\qquad
\pd_\varphi\Phi_l = \pd_r\pd_\varphi\Phi_l.
\]
Therefore, we have
 \begin{subequations}\label{eq1.3modified-lin-irro-0}
\begin{empheq}[right=\empheqrbrace\text{on $\pd B_1$, $t>0$,}]{align}
\pd_r\Phi_l =&\, \pd_r\Phi_g = \pd_t\mathcal R, \label{eq1.3modified-a-lin-irro-0}\\
\mathcal P_g - \mathcal P_l +\frac{2\mu_l}{R_*^2}\pd_r^2\Phi_l =& -\frac{\si}{R_*^2}(2+\De_S)\mathcal R, \label{eq1.3modified-b-lin-irro-0}\\
 \pd_\th\pd_r\Phi_l - \pd_\th\Phi_l =&\, 0,\ \text{ if $\mu_l>0$,} \label{eq1.3modified-b1-lin-irro-1}\\
 \pd_\varphi\pd_r\Phi_l - \pd_\varphi\Phi_l =&\, 0,\ \text{ if $\mu_l>0$,} \label{eq1.3modified-b2-lin-irro-1}\\
\mathcal P_g =&\, \Rg T_\infty \varrho,\label{eq1.3modified-c-lin-irro-0}
\end{empheq}
\end{subequations}
with the far-field conditions 
\EQ{\label{eq-far-field-0}
\nb\Phi_l(y,t) = O(|y|^{-2}),\qquad
D^2\Phi_l(y,t) \to \mathbb O,\qquad
\mathcal P_l(y,t) \to 0,\quad \textrm{as}\quad  |y|\to\infty.
}
%Note that \eqref{eq1.3modified-b1-lin-irro-1} and \eqref{eq1.3modified-b2-lin-irro-1} are derived from the tangential components in \eqref{eq1.3simplified-b-equilib}.
Note that, by applying the Laplacian to \eqref{eq1.1modified-a-lin-irro-0} and then using \eqref{eq1.1modified-b-lin-irro-0}, we have that $\mathcal{P}_l$ solves the following boundary value problem on the exterior of $B_1$:
\EQN{
\De_y\mathcal{P}_l &= 0,\qquad\qquad\qquad\qquad\qquad\qquad\qquad \, \text{ in }\R^3\setminus B_1,\\
\mathcal{P}_l &= \mathcal{P}_g + \frac{2\mu_l}{R_*^2} \pd_r^2\Phi_l + \frac{\si}{R_*^2}(2+\De_S)\mathcal{R},\quad\, \text{ on }\pd B_1,\\
\mathcal{P}_l&\to0,\qquad\qquad\qquad\qquad\qquad\qquad\qquad  \text{ as }|y|\to\infty.
}
Therefore, $\mathcal{P}_l = \mathcal{P}_l[\mathcal{R}, \Phi_l, \mathcal{P}_g]$ by using the Poisson kernel (normal derivative of Dirichlet Green function) for the Laplace equation on the exterior domain $\R^3\setminus B_1$.  
Moreover, since $\mathcal{P}_g = \mathcal{P}_g(t)$, $\mathcal{P}_g = \mathcal{P}_g[\varrho]$ by \eqref{eq1.3modified-c-lin-irro-0}.
Furthermore, since $\Phi_g$ satisfies the Poisson's equation with non-homogeneous Neumann:
\EQ{\label{eq-Phi-g-Neumann}
\De_y\Phi_g &= - \frac{R_*}{\rho_*} \pd_t\varrho,\quad \text{ in $B_1$, $t>0$},\\
\pd_r\Phi_g &= \pd_t\mathcal{R},\qquad\quad \text{ on $\pd B_1$, $t>0$},
}
we have $\Phi_g = \Phi_g[\mathcal{R}, \varrho]$.
% by using the Green's formula:
%\EQ{\label{eq-Phi-g-soln-formula}
%\Phi_g(y,t) = \int_{B_1} N(y,z) \bkt{\frac{R_*}{\rho_*} \pd_t\varrho(z,t)} dz + \int_{\pd B_1} N(y,z) \pd_t\mathcal{R}(z,t)\, dS_z,
%}
%where $N(y,z)$ is the Neumman-Green function for $-\De$ in $B_1$.
Note that one can solve \eqref{eq1.1modified-lin-irro-0}, \eqref{eq1.2modified-b-lin-irro-0}, \eqref{eq1.3modified-lin-irro-0}, \eqref{eq-far-field-0} for $(\mathcal{R}, \Phi_l,\varrho)$ without using $\Phi_g$ (see Section \ref{sec-linear-wellposed} below). 
Therefore, \eqref{eq1.1modified-lin-irro-0}--\eqref{eq-far-field-0} can be reduced to a problem with unknowns 
\[
\mathcal{R}(\th,\varphi,t),\ \Phi_l(y,t),\ \varrho(y,t).
\]
This completes our formulation of the linearized dynamics.

By choosing the origin of coordinates to be  the centroid of volume of the bubble, we may assume the linearized zero-centroid-of-volume condition 
%\eqref{eq-linear-centroid=0}, which gives
\EQ{\label{eq-linear-centroid}
\bka{\mathcal R, Y_1^m}_{L^2(\mathbb S^2)} = 0,\qquad m=-1,0,1;
}
%\EQ{\label{eq-a1=0}
%a_1^m(t)\equiv 0,\qquad m=-1,0,1;} 
see Appendix \ref{appdx-centroid-frame}.

\subsection{Initial data}
The linearized system \eqref{eq1.1modified-lin-irro-0}--\eqref{eq-far-field-0} is coupled with the initial data %\footnote{\crr{\bf We don't need initial data for $\Phi_g$; see above.}}
\[
\mathcal{R}(\cdot,\cdot,0),\
\Phi_l(\cdot,0),\
\varrho(\cdot,0).
%\Phi_g(\cdot,0).
\]

To investigate the asymmetric deformations of a nearly spherical gas bubble, 
we fix an equilibrium $(\rho_{g,*}, B_{R_*},\ldots)$ and choose the initial data to be a perturbation of the equilibrium.
For the isobaric approximation, the mass inside the bubble is independent of time; there is no mass exchange across the bubble-fluid boundary; see \cite[Proposition 7.3]{LW-vbas2022}, 
if the initial data has different mass than the equilibrium, one should not expect the solution converging to the equilibrium.
Indeed, the solution is expected to converge to a near-by equilibrium that has same mass as the initial data.
In view of the continuity result in \cite[Proposition 4.7]{LW-vbas2022}, we may consider perturbations that have the same mass as the equilibrium $(\rho_{g,*}, B_{R_*},\ldots)$. 
In light of the linearized conservation of mass \cite[(2.3)]{LW-vbaslinear2023}, this mass constraint is equivalent to imposing the following constraint on the initial data  \cite[(2.5)]{LW-vbaslinear2023}:
\EQ{\label{eq-linear-mass-conserve}
\int_{B_1} \varrho(y,0)\, dy + 4\pi\frac{\rho_*}{R_*}\left\langle Y_0^0,\mathcal {R}(\cdot,\cdot,0)\right\rangle_{_{L^2(\mathbb{S}^2)}} = 0.
}

\subsection{Main Theorems}

In this subsection we state the main results of the present paper.

We develop the well-posedness theorem for the inviscid linearized system in the following function space:
\EQ{\label{eq-X}
X = \Bigg\{(\mathcal{R}, \Phi_l,\varrho)&: \mathcal{R}\in L^2(\mathbb{S}^2),\, 
\Phi_l\in L^2(\R^3\setminus B_1)\ \text{satisfying $\De\Phi_l=0$ and the decay \eqref{eq-far-field-0},}\\
&\quad \varrho\in L^2(B_1)\ \text{satisfying the constraint \eqref{eq-linear-mass-conserve}}\Bigg\}.
}

Let $Y_\ell^m:\mathbb{S}^2\to\R$ be the spherical harmonic function of degree $\ell$ and order $m$, where $\ell=0,1,2,\ldots$, and $|m|\le\ell$.  
It satisfies $-\De_{\mathbb S^2}Y_\ell^m = \ell(\ell+1)Y_\ell^m$.

\begin{theorem}[Linear well-posedness for the inviscid case]\label{thm-wellposed}
If $\mu_l = 0$, then for any initial data $(\mathcal{R}(\cdot,0), \Phi_l(\cdot,0),\varrho(\cdot,0))\in X$ there exists a unique solution $(\mathcal{R},\Phi_l,\varrho)\in C^1([0,\infty);X)$ of the linearized system \eqref{eq1.1modified-lin-irro-0}--\eqref{eq-linear-mass-conserve}.
%\textcolor{red}{List detailed properties of solution that are proved -- monopole decay, shape mode oscillation,\dots}

Moreover, the solution exhibits the following properties in the monopole / radial mode and the multipole / shape modes:

\textit{\textbf{Monopole / radial mode}}:
\EQ{\label{eq-radial-mode}
\bka{Y_0^0,\mathcal{R}}_{L^2(\mathbb{S}^2)}(t),\ \sup_{r\ge1}\, \bka{Y_0^0,\Phi_l(r,\cdot,\cdot,t)}_{L^2(\mathbb{S}^2)}(t),\text{ and }&\int_{B_1} \abs{\bka{Y_0^0, \varrho(|y|,\cdot,\cdot,t)}_{L^2(\mathbb{S}^2)}}^2 dy\\
& \text{decay exponentially to zero as $t\to\infty$,}
}
and if, additionally, $\bka{Y_0^0, \varrho(|y|,\cdot,\cdot,0)}_{L^2(\mathbb{S}^2)}\in C^{2+2\al}_y(B_1)$ for some $\al\in(0,1/2)$, then
\EQ{\label{eq-radial-mode-Phi-g}
\sup_{r\le1}\, \bka{Y_0^0, \Phi_g(r,\cdot,\cdot,t)}_{L^2(\mathbb{S}^2)}\ \text{ and }&\ \sup_{r\le1}\, \bka{Y_0^0, \nb\Phi_g(r,\cdot,\cdot,t)}_{L^2(\mathbb{S}^2)}\\
&\qquad \text{ decay exponentially to zero as $t\to\infty$.}
}

\textit{\textbf{Multipole / shape modes}}: For $\ell\ge2$, 
\EQ{\label{eq-shape-modes}
\bka{Y_\ell^m,\mathcal{R}}_{L^2(\mathbb{S}^2)}(t)\ \text{ and }\ &\bka{Y_\ell^m,\Phi_l(r,\cdot,\cdot,t)}_{L^2(\mathbb{S}^2)}\ \text{ are }\\
&\qquad \text{$(2\pi)/\sqrt{\frac{\si}{\rho_lR_*^3}(\ell+2)(\ell+1)(\ell-1)}$-periodic functions in $t$},
}
\EQ{\label{eq-shape-modes-rho}
\int_{B_1} \abs{ \bka{Y_\ell^m,\varrho(|y|,\cdot,\cdot,t)}_{L^2(\mathbb{S}^2)}}^2 dy\ \text{ decays exponentially to zero as $t\to\infty$, }
}
and
\EQ{\label{eq-shape-modes-Phi-g}
\bka{Y_\ell^m, \Phi_g(r,\cdot,\cdot,t)}_{L^2(\mathbb{S}^2)}\ \text{ and }\ &\bka{Y_\ell^m, \nb\Phi_g(r,\cdot,\cdot,t)}_{L^2(\mathbb{S}^2)}\ \text{ are asymptotically }\\
&\text{$(2\pi)/\sqrt{\frac{\si}{\rho_lR_*^3}(\ell+2)(\ell+1)(\ell-1)}$-periodic functions in $t$}.
}

\end{theorem}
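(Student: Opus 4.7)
The plan is a full spherical harmonic decomposition that reduces \eqref{eq1.1modified-lin-irro-0}--\eqref{eq-linear-mass-conserve} to a decoupled family of scalar problems indexed by $(\ell,m)$. Writing $\mathcal{R}=\sum \mathcal{R}_\ell^m(t) Y_\ell^m$, $\Phi_l=\sum \Phi_{l,\ell}^m(r,t) Y_\ell^m$, $\varrho=\sum \varrho_\ell^m(r,t) Y_\ell^m$, and similarly for $\mathcal{P}_l$ and $\Phi_g$, the rotational invariance of the linearized operators decouples the modes, and well-posedness in $X$ will follow from solving each mode together with $\ell^2$-summability of the resulting coefficients.

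First, the harmonicity of $\Phi_l$ on $\R^3\setminus B_1$ together with the decay \eqref{eq-far-field-0} forces $\Phi_{l,\ell}^m(r,t)=A_\ell^m(t)\, r^{-(\ell+1)}$, and the kinematic condition \eqref{eq1.3modified-a-lin-irro-0} gives $A_\ell^m(t)=-\dot{\mathcal{R}}_\ell^m(t)/(\ell+1)$. The Bernoulli relation \eqref{eq1.1modified-a-lin-irro-0} then determines $\mathcal{P}_{l,\ell}^m|_{\pd B_1}$ in terms of $\ddot{\mathcal{R}}_\ell^m$, and projecting the stress balance \eqref{eq1.3modified-b-lin-irro-0} (with $\mu_l=0$) onto $Y_\ell^m$ and using $(2+\De_S)Y_\ell^m = -(\ell-1)(\ell+2)Y_\ell^m$ produces a scalar second-order ODE for $\mathcal{R}_\ell^m$. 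For $\ell\ge2$, the spatial uniformity of $\mathcal{P}_g(t)$ annihilates its contribution (it lives purely in the $Y_0^0$ mode), and the ODE reduces to the classical Rayleigh oscillator
\[
\ddot{\mathcal{R}}_\ell^m + \omega_\ell^2\, \mathcal{R}_\ell^m = 0,\qquad \omega_\ell^2 = \frac{\si}{\rho_l R_*^3}(\ell+2)(\ell+1)(\ell-1),
\]
yielding the $(2\pi/\omega_\ell)$-periodicity of $\mathcal{R}_\ell^m$ and hence of $\Phi_{l,\ell}^m$. By the same mechanism, the projection of \eqref{eq1.3modified-c-lin-irro-0} gives $\varrho_\ell^m|_{r=1}=0$ and the $\pd_t\mathcal{P}_g$ source in \eqref{eq1.2modified-b-lin-irro-0} drops out, so $\varrho_\ell^m$ satisfies a homogeneous radial heat equation on $[0,1]$ with zero Dirichlet data; spectral decay then gives exponential decay of $\|\varrho_\ell^m(\cdot,t)\|_{L^2_r}$ with rate bounded below by the first Dirichlet eigenvalue of $-\De_y$ on $B_1$ in the $\ell$-th angular sector, and $\ell^2$-summability is inherited from the $L^2(B_1)$ initial datum.

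The $\ell=0$ monopole couples $\mathcal{R}_0^0$, $\varrho_0^0$ and $\mathcal{P}_g(t)$ through the ideal-gas law and the mass constraint, but it reduces precisely to the radial system analyzed in \cite{LW-vbaslinear2023}, from which I import exponential decay of $\mathcal{R}_0^0$, $A_0^0$, and $\|\varrho_0^0\|_{L^2_r}$, together with preservation of \eqref{eq-linear-mass-conserve} (checked by differentiating in $t$ and using \eqref{eq1.2modified-a-lin-irro-0}). The $\ell=1$ sector is degenerate since $\omega_1=0$, leaving $\ddot{\mathcal{R}}_1^m=0$ with general solution $a+bt$; the centroid gauge \eqref{eq-linear-centroid} and the corresponding zero-momentum condition on the initial $\Phi_l$ from Appendix \ref{appdx-centroid-frame} force $a=b=0$, so these modes drop out entirely. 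Finally $\Phi_g$ is recovered mode-by-mode from the Neumann problem \eqref{eq-Phi-g-Neumann}: for $\ell\ge2$, the Neumann datum $\dot{\mathcal{R}}_\ell^m$ is periodic while the source $\pd_t\varrho_\ell^m$ is exponentially small, giving asymptotic periodicity; for $\ell=0$, both inputs decay exponentially, with the $C^{2+2\al}$ hypothesis on $\varrho_0^0(\cdot,0)$ feeding parabolic Schauder theory to upgrade the $L^2$ decay of $\pd_t\varrho_0^0$ to the pointwise decay needed to control $\nb\Phi_g$.

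The hard part will be the bookkeeping: assembling the mode-by-mode estimates into a bound giving $C^1([0,\infty);X)$ regularity requires uniformity of the heat-equation decay rates over $\ell$, consistency of the $\Phi_l$ representation $A_\ell^m r^{-(\ell+1)}$ with the topology of $X$, and preservation in time of \eqref{eq-linear-centroid} and \eqref{eq-linear-mass-conserve}. The most delicate technical step is the $\Phi_g$ decay in the monopole mode, where the parabolic regularity supplied by the $C^{2+2\al}$ hypothesis bridges the initial H\"older regularity with the required exponential-in-time control of the Neumann data.
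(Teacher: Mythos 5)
Your proposal is correct and follows essentially the same route as the paper: spherical harmonic/multipole decomposition, the kinematic and Bernoulli relations yielding $\dot a_\ell^m=-(\ell+1)b_\ell^m$ and the Rayleigh oscillator for $\ell\ge2$, reduction of the monopole sector to the radial system of \cite{LW-vbaslinear2023}, the Dirichlet heat equation for $\varrho_\ell^m$ with $\ell\ge2$, and recovery of $\Phi_g$ from the Neumann problem via the Green's function. The only points where you are slightly more explicit than the paper are the $\ell^2$-summability bookkeeping for the $C^1([0,\infty);X)$ claim and the $\ell=1$ gauge discussion, both of which are consistent with the paper's treatment.
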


The proof of Theorem \ref{thm-wellposed} is  presented in Section \ref{sec-linear-wellposed}.

The following theorem demonstrates the incompatibility between the viscosity and fluid irrotationality in the linearized system \eqref{eq1.1modified-lin-irro-0}--\eqref{eq-far-field-0}.

\begin{theorem}[Linear ill-posedness of the viscous irrotational problem IVP]\label{thm-linear-illposed} 
Assume $\mu_l>0$.
Assume $(\mathcal{R},\Phi_l,\varrho)\in C^1([0,\infty);X)$, continuous up to $t=0$, is a solution to the linearized system \eqref{eq1.1modified-lin-irro-0}--\eqref{eq-linear-mass-conserve}.
Then $\Phi_l= \Phi_l(r,t)$ and $\mathcal{R}= \mathcal{R}(t)$ are independent of $\th$ and $\varphi$ for all $t\ge0$.
\end{theorem}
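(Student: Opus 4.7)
The plan is to use the tangential components of the stress-balance boundary conditions, which only appear because $\mu_l>0$, together with a spherical-harmonic expansion of $\Phi_l$, to force $\Phi_l$ to be spherically symmetric; then to use the radial component of the stress-balance to constrain all but the $\ell=0,1$ multipoles of $\mathcal{R}$; and finally to use the linearized centroid-of-volume condition \eqref{eq-linear-centroid} to kill the $\ell=1$ modes.

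\textbf{Step 1: Expand $\Phi_l$ and exploit harmonicity in the exterior.} Since $\Delta_y\Phi_l=0$ in $\R^3\setminus B_1$ with $\nabla\Phi_l=O(|y|^{-2})$ at infinity, for each $t\geq 0$ I expand
\[
\Phi_l(r,\theta,\varphi,t) \;=\; \sum_{\ell\geq 0}\sum_{|m|\leq \ell} a_{\ell,m}(t)\, r^{-(\ell+1)}\, Y_\ell^m(\theta,\varphi),
\]
which is the unique representation compatible with the decay conditions in \eqref{eq-far-field-0}.

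\textbf{Step 2: Use the tangential stress conditions to kill all $\ell\geq 1$ modes.} The hypotheses \eqref{eq1.3modified-b1-lin-irro-1}--\eqref{eq1.3modified-b2-lin-irro-1} state that $\partial_\theta[(\partial_r-1)\Phi_l]\big|_{r=1}=0$ and $\partial_\varphi[(\partial_r-1)\Phi_l]\big|_{r=1}=0$. Since $\mathbb{S}^2$ is connected, this forces $(\partial_r-1)\Phi_l|_{r=1}$ to depend on $t$ alone. Evaluating the expansion above,
\[
(\partial_r-1)\Phi_l\big|_{r=1} \;=\; -\sum_{\ell\geq 0}\sum_{|m|\leq \ell}(\ell+2)\,a_{\ell,m}(t)\, Y_\ell^m(\theta,\varphi).
\]
By orthogonality of spherical harmonics, the vanishing of every coefficient with $\ell\geq 1$ yields $a_{\ell,m}(t)\equiv 0$ for all $\ell\geq 1$. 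Hence $\Phi_l(r,\theta,\varphi,t)=a_{0,0}(t)\,r^{-1}Y_0^0$ is purely radial, so in particular $\mathcal{P}_l=-\rho_* R_*\partial_t\Phi_l$ and $\partial_r^2\Phi_l|_{r=1}$ depend on $t$ only. From the kinematic condition \eqref{eq1.3modified-a-lin-irro-0}, $\partial_t\mathcal{R}=\partial_r\Phi_l|_{r=1}$ is then also independent of $\theta,\varphi$.

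\textbf{Step 3: Use the normal stress balance to kill $\ell\geq 2$ modes of $\mathcal{R}$.} Write $\mathcal{R}(\theta,\varphi,t)=\sum_{\ell,m} b_{\ell,m}(t)\,Y_\ell^m(\theta,\varphi)$. Plugging into the radial stress balance \eqref{eq1.3modified-b-lin-irro-0} and using Step 2, the left-hand side depends only on $t$, while the right-hand side equals
\[
-\frac{\sigma}{R_*^2}\,(2+\Delta_S)\mathcal{R} \;=\; -\frac{\sigma}{R_*^2}\sum_{\ell,m}\bigl(2-\ell(\ell+1)\bigr)\,b_{\ell,m}(t)\,Y_\ell^m.
\]
Since $2-\ell(\ell+1)\neq 0$ for all $\ell\geq 2$, orthogonality forces $b_{\ell,m}(t)\equiv 0$ for $\ell\geq 2$.

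\textbf{Step 4: Apply the centroid condition to kill the $\ell=1$ modes.} The factor $2-\ell(\ell+1)$ vanishes precisely at $\ell=1$, so Step 3 does not directly constrain $b_{1,m}(t)$. This is where the linearized zero-centroid-of-volume normalization \eqref{eq-linear-centroid} enters: it gives $b_{1,m}(t)=0$ for $m=-1,0,1$ (the paper's convention is that, having chosen coordinates so the centroid is at the origin, this condition is preserved by the dynamics, cf.\ Appendix \ref{appdx-centroid-frame}). Therefore $\mathcal{R}(\theta,\varphi,t)=b_{0,0}(t)\,Y_0^0$ depends on $t$ only, completing the proof.

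The main obstacle is conceptually the $\ell=1$ case in Step 4: the overdetermination produced by viscosity plus irrotationality singles out exactly the shape modes $\ell\geq 2$, while $\ell=1$ corresponds to rigid translation of the bubble and lies in the kernel of $2+\Delta_S$. Consequently one cannot eliminate $\ell=1$ from the stress balance alone and must invoke the centroid normalization.
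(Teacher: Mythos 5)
Your proposal is correct and follows the same overall strategy as the paper: the tangential viscous stress conditions \eqref{eq1.3modified-b1-lin-irro-1}--\eqref{eq1.3modified-b2-lin-irro-1} force $\Phi_l$ to be radial, the normal stress balance then annihilates the $\ell\ge2$ modes of $\mathcal{R}$ via the factor $(\ell+2)(\ell-1)$, and the centroid normalization \eqref{eq-linear-centroid} disposes of $\ell=1$. The one genuine difference is how radiality of $\Phi_l$ is obtained: the paper's Proposition \ref{prop-Phil-radial} proves $\pd_\varphi\Phi_l\equiv0$ and $\pd_\th\Phi_l\equiv0$ by integration-by-parts identities on $\R^3\setminus B_1$ (the boundary conditions make the boundary terms sign-definite), which works directly at the level of the PDE without invoking the series; you instead observe that $(\pd_r-1)\Phi_l\big|_{r=1}=-\sum_{\ell,m}(\ell+2)a_{\ell,m}Y_\ell^m$ must be constant on $\mathbb{S}^2$ and kill the coefficients by orthogonality, since $\ell+2\neq0$. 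Your coefficient argument is shorter and more transparent, at the cost of relying on the validity of term-by-term differentiation of the multipole expansion up to $r=1$; since the paper itself works with the expansion \eqref{eq-decompose-Phi-l} throughout Section 5, this is at the same level of rigor. Minor points: your $\mathcal{P}_l$-is-radial observation in Step~2 is not actually needed for Step~3 (only that the $\ell\ge1$ projections of the left side of \eqref{eq1.3modified-b-lin-irro-0} vanish), and your diagnosis that $\ell=1$ lies in the kernel of $2+\De_S$ and must be handled by the centroid condition matches the paper exactly.
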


In other words, any regular solution  of the linearized system \eqref{eq1.1modified-lin-irro-0}--\eqref{eq-linear-mass-conserve}, which attains its initial values continuously, can only arise from radially symmetric initial data. Note also that a  radial velocity field is necessarily irrotational, so there is no contradiction with the well-posedness of the radial viscous problem. Theorem \ref{thm-linear-illposed} is proved in Section \ref{sec:linear-illposed}.

Based on the linear ill-posedness above (Theorem \ref{thm-linear-illposed}), we establish the following ill-posedness for the nonlinear, viscous irrotational problem \eqref{eq1.1simplified-red-irro}--\eqref{eq-far-field-all-irro}.

\begin{theorem}[Nonlinear ill-posedness for the viscous case]\label{thm-irro-illposed}
If $\mu_l>0$, then the irrotational system \eqref{eq1.1simplified-red-irro}--\eqref{eq-far-field-all-irro} is ill-posed in the sense that described in Section \ref{sec-illposed}.
\end{theorem}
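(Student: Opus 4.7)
The plan is to deduce nonlinear ill-posedness from Theorem \ref{thm-linear-illposed} by a Hadamard-variation argument: if a nonlinear solution map existed and were smooth in an appropriate sense in directions transverse to the spherically symmetric sector, then its first variation would satisfy the linearized system \eqref{eq1.1modified-lin-irro-0}--\eqref{eq-far-field-0}, and hence by Theorem \ref{thm-linear-illposed} the first variation would have to be independent of $(\th,\varphi)$. Since one can prescribe non-radial initial data as the direction of variation, this will give the required contradiction.

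Concretely, I would fix a spherical equilibrium $(\phi_{l,*},\rho_{g,*},\Om_*)$ from \eqref{eq-equilibrium} and consider a smooth one-parameter family of admissible initial data
\[
\bigl(\phi_l^{(\de)}(\cdot,0),\ \rho_g^{(\de)}(\cdot,0),\ \Om^{(\de)}(0)\bigr), \qquad \de \ge 0,
\]
reducing to the equilibrium at $\de=0$, with prescribed initial $\de$-derivative $(\Phi_l^{\rm ini},\varrho^{\rm ini},\mathcal R^{\rm ini})$ chosen to be angularly non-trivial but satisfying the centroid and mass conditions \eqref{eq-linear-centroid}, \eqref{eq-linear-mass-conserve}. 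Suppose, for contradiction, that for each small $\de$ a regular solution of \eqref{eq1.1simplified-red-irro}--\eqref{eq-far-field-all-irro} exists and the family is $C^1$ in $\de$ in suitable norms, with the moving boundary parameterized as $\boldsymbol{\om}^{(\de)}(\xi,t)=R_*\xi+\de\mathcal R(\xi,t)\xi+O(\de^2)$ as in \eqref{eq-linearize-old}. Differentiating each bulk equation, interface condition, and far-field condition of \eqref{eq1.1simplified-red-irro}--\eqref{eq-far-field-all-irro} at $\de=0$, one recovers exactly the linearized system \eqref{eq1.1modified-lin-irro-0}--\eqref{eq-far-field-0}; this is the content of the derivation performed at the beginning of Section \ref{sec:linearized-dynamics}. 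In particular, since $\mu_l>0$, the first variation satisfies the tangential stress identities \eqref{eq1.3modified-b1-lin-irro-1}--\eqref{eq1.3modified-b2-lin-irro-1}.

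Invoking Theorem \ref{thm-linear-illposed} applied to this first variation $(\mathcal R,\Phi_l,\varrho)=\pd_\de(\mathcal R^{(\de)},\Phi_l^{(\de)},\varrho^{(\de)})|_{\de=0}$, it must be the case that $\mathcal R=\mathcal R(t)$ and $\Phi_l=\Phi_l(r,t)$ are independent of $(\th,\varphi)$ for all $t\ge 0$. Evaluated at $t=0$, this contradicts the choice of a non-radial $(\mathcal R^{\rm ini},\Phi_l^{\rm ini})$. The failure of any such $C^1$-in-$\de$ deformation of the equilibrium solution is precisely the notion of ill-posedness to be formalized in Section \ref{sec-illposed}; no continuously differentiable branch of nonlinear solutions can emanate from the equilibrium in directions transverse to the radial subspace.

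The main obstacle I expect is rigorously justifying that the Hadamard/shape derivative of a $C^1$-in-$\de$ family of nonlinear solutions to \eqref{eq1.1simplified-red-irro}--\eqref{eq-far-field-all-irro} yields exactly \eqref{eq1.1modified-lin-irro-0}--\eqref{eq-far-field-0}. The two places that require careful bookkeeping are (i) the surface-divergence term $\si\hat{\bf n}(\nb_S\cdot\hat{\bf n})$ on the moving surface $\pd\Om^{(\de)}(t)$, whose linearization produces $-\si R_*^{-2}(2+\De_S)\mathcal R\,\hat{\bf r}$; and (ii) the viscous tangential stress $2\mu_l\hat{\bf n}\cdot[D^2\phi_l^{(\de)}]$, where both the Hessian and the outward normal carry $\de$-dependence. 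Once this variation calculation is pinned down -- which is essentially the content of the derivation in Section \ref{sec:linearized-dynamics} carried out with sufficient functional-analytic care -- Theorem \ref{thm-linear-illposed} closes the argument.
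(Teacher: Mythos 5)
Your proposal is correct and follows essentially the same route as the paper: assume a well-posed ($C^1$-in-data) nonlinear solution map, differentiate the solution family at the equilibrium to obtain that the first variation solves the linearized system \eqref{eq1.1modified-lin-irro-0}--\eqref{eq-far-field-0}, and invoke Theorem \ref{thm-linear-illposed} to contradict the non-radial choice of the initial variation. The paper merely packages this in an abstract formulation $\pd_t u=\mathcal{N}(u)$ on a fixed domain with an explicit definition of local well-posedness, and carries out the $\ep\to0$ limit bookkeeping (via L'H\^opital) that you flag as the main technical point.
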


The proof of Theorem \ref{thm-irro-illposed} is presented  in Section \ref{sec-illposed}.

\section{Spherical harmonic analysis of the non-viscous linearized system; proof of Theorem \ref{thm-wellposed} }\label{sec-linear-wellposed}

In view of Theorem \ref{thm-linear-illposed}  and the comment that follows it, we focus on the inviscid  ($\mu_l=0$)  IVP for general data, 
 and discuss the 
large time behavior of the solution.

%\crb{
%In view of Theorem \ref{thm-linear-illposed}, solutions to the viscous problem are radially symmetric, where the stability result has been established in \cite{LW-vbas2022}. In this section, we therefore focus  }

We represent the state variables $\mathcal{R}, \Phi_l, \varrho, \Phi_g$ with respect to a  spherical harmonic basis $\{Y_\ell^m(\theta,\varphi)\}$ for  $L^2(\mathbb{S}^2)$, satisfying  $-\De_{\mathbb S^	2}Y_\ell^m = \ell(\ell+1)Y_\ell^m$:
\begin{align}
\label{eq-decompose-R}
\mathcal R(\th,\varphi,t) &= \sum_{\ell=0}^\infty \sum_{|m|\le\ell} a_\ell^m(t) Y_\ell^m(\th,\varphi),
\quad a_l^m(t)=\left\langle Y_\ell^m,\mathcal R\right\rangle_{_{L^2(\mathbb{S}^2)}}(t),\\
\label{eq-decompose-Phi-l}
\Phi_l(r,\th,\varphi,t) &= \sum_{\ell=0}^\infty \sum_{|m|\le\ell} b_\ell^m(t) \bke{\frac1r}^{\ell+1} Y_\ell^m(\th,\varphi),\quad b_l^m(t)=\left\langle Y_\ell^m, \Phi_l\big|_{r=1}\right\rangle_{_{L^2(\mathbb{S}^2)}}(t),\\
\label{eq-rho-deomposition}
\varrho(r,\th,\varphi,t) &= \sum_{\ell=0}^\infty \sum_{|m|\le\ell} f^m_\ell(r,t)Y_\ell^m(\th,\varphi)
=: \sum_{\ell=0}^\infty \sum_{|m|\le\ell} \varrho_\ell^m(r,\th,\varphi,t),\\
  f_l^m(r,t) &=\left\langle Y_\ell^m, \varrho(r,\cdot,\cdot,t)\right\rangle_{_{L^2(\mathbb{S}^2)}}(r,t), \nonumber\\
\label{eq-Phig-decomposition}
\Phi_g(r,\th,\varphi,t) &= \sum_{\ell=0}^\infty \sum_{|\ell|\le m} \Psi_\ell^m(r,t) Y_\ell^m(\th,\varphi)
=: \sum_{\ell=0}^\infty \sum_{|\ell|\le m} (\Phi_g)_\ell^m(r,\th,\varphi,t),\\
\Psi_\ell^m(r,t) &=\left\langle Y_\ell^m, \Phi_g(r,\cdot,\cdot,t)\right\rangle_{_{L^2(\mathbb{S}^2)}}(r,t). \nonumber
\end{align}
The expression \eqref{eq-decompose-Phi-l} is a multipole expansion 
\cite{Thorne-RMP1980}; each term is harmonic (hence  $\De\Phi_l=0$ in $\R^3\setminus B_1$) and satisfies the far-field conditions \eqref{eq-far-field-0}.
 The expansion will be constructed and convergence issues will be addressed below.

%In the inviscid case ($\mu_l=0$), where the linearized system is well-posed, we prove that the radial / monopole mode exhibits both thermal and viscous dampings, while the shape / multiple modes exhibit no damping.
%
%
%\begin{theorem}[Damping mechanisms for the inviscid case]\label{thm-main}
%If $\mu_l=0$.
%Let $(\mathcal{R},\Phi_l,\varrho,\Phi_g)\in C^1([0,\infty);X)$ be a solution of the linearized system \eqref{eq1.1modified-lin-irro-0}--\eqref{eq-far-field-0} and have the expansions \eqref{eq-decompose-R}--\eqref{eq-Phig-decomposition}.
%Then we have the followings:
%
%(a) For $\ell=0$ (radial / monopole mode), $a_0^0(t)\to0$ as $t\to\infty$ provided $\mu_l>0$ or $\ka>0$.
%
%(b)
%Moreover, for $\ell\ge2$ (shape / multipole modes), $a_\ell^m(t)$ does not decay to zero, as $t$ approaches to infinity, even when $\mu_l>0$ and $\ka>0$.
%
%
%
%
%%In the linearized system \eqref{eq1.1modified-lin-irro-1}--\eqref{eq-far-field}, the radial / monopole mode exhibits both thermal and viscous dampings, while the shape / multipole modes exhibit no damping.
%
%\end{theorem}
%
%Theorem \ref{thm-main} (a) follows from Proposition \ref{prop-l=0}, and Theorem \ref{thm-main} (b) is a direct consequence of the ODE \eqref{eq-a-old-hmode} below.

\subsection{Observing the bubble from the outside liquid}\label{sec-outside}

\begin{proposition}\label{prop-liquid-ode}
Let $(\mathcal{R}, \Phi_l,\mathcal P_l)$ be the solution of the system \eqref{eq1.1modified-lin-irro-0} with the boundary condition \eqref{eq1.3modified-a-lin-irro-0} and the far-field conditions \eqref{eq-far-field-0}.
Suppose $\mathcal R$ and $\Phi$ have the decompositions \eqref{eq-decompose-R} and \eqref{eq-decompose-Phi-l}. In particular,
\[ a_l^m(t)=\left\langle Y_\ell^m,\mathcal R\right\rangle_{_{L^2(\mathbb{S}^2)}}(t),\quad \textrm{and}\quad 
b_l^m(t)=\left\langle Y_\ell^m, \Phi_l\big|_{r=1}\right\rangle_{_{L^2(\mathbb{S}^2)}}(t).
\]
Then, 
\begin{align} \label{eq-dot-a}
\dot a_\ell^m(t) &= - (\ell+1) b_\ell^m(t),\qquad \ell=0,1,2,\ldots,\quad |m|\le\ell, 
\\ \label{eq-Pl-expand}
\mathcal P_l(r,\th,\varphi,t) &= -\rho_lR_* \sum_{\ell=0}^\infty \sum_{|m|\le\ell} \dot b_\ell^m(t) \bke{\frac1r}^{\ell+1} Y_\ell^m(\th,\varphi).
\end{align}
\end{proposition}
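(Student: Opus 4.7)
The plan is to carry out a direct mode-by-mode verification in three steps: justify the multipole form of the expansion \eqref{eq-decompose-Phi-l}, project the kinematic boundary condition onto spherical harmonics to obtain \eqref{eq-dot-a}, and then read off \eqref{eq-Pl-expand} from the linearized Bernoulli relation.

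First, I would justify the form \eqref{eq-decompose-Phi-l}. Since $\Phi_l$ is harmonic on the exterior domain $\R^3\setminus B_1$ by \eqref{eq1.1modified-b-lin-irro-0}, separation of variables in spherical coordinates gives the general expansion $\Phi_l = \sum_{\ell,m}\bigl[b_\ell^m(t)\,r^{-\ell-1} + \tilde c_\ell^m(t)\,r^\ell\bigr]Y_\ell^m(\th,\varphi)$. The far-field decay $\nb\Phi_l = O(|y|^{-2})$ from \eqref{eq-far-field-0} kills every $\tilde c_\ell^m$ with $\ell\ge 1$; the residual $\ell=0$ constant has zero gradient and can be absorbed into the equilibrium constant $c_1$ appearing in \eqref{eq-linearize-old} without changing the linearized fields. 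The identification $b_\ell^m(t) = \bka{Y_\ell^m,\Phi_l|_{r=1}}_{L^2(\mathbb{S}^2)}$ is then forced by the $L^2(\mathbb{S}^2)$-orthonormality of $\{Y_\ell^m\}$.

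Second, using \eqref{eq-decompose-Phi-l} I would differentiate termwise in $r$ and evaluate at $r=1$ to get $\pd_r\Phi_l|_{r=1} = -\sum_{\ell,m}(\ell+1)\,b_\ell^m(t)\,Y_\ell^m$. The kinematic boundary condition \eqref{eq1.3modified-a-lin-irro-0} then reads $\pd_t\mathcal{R} = \sum_{\ell,m}\dot a_\ell^m(t)\,Y_\ell^m = -\sum_{\ell,m}(\ell+1)\,b_\ell^m(t)\,Y_\ell^m$, and matching coefficients yields \eqref{eq-dot-a} mode by mode. Third, the linearized Bernoulli equation \eqref{eq1.1modified-a-lin-irro-0} supplies $\mathcal{P}_l = -\rho_l R_*\,\pd_t\Phi_l$; since the solution is $C^1$ in $t$ with values in $X$, termwise time differentiation of the multipole series gives $\pd_t\Phi_l = \sum_{\ell,m}\dot b_\ell^m(t)\,r^{-\ell-1}Y_\ell^m$, and substitution produces \eqref{eq-Pl-expand}.

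There is no substantive obstacle: the whole argument is a sequence of termwise manipulations. The only technical point worth flagging is justifying termwise differentiation in $r$ and $t$ and equating spherical-harmonic coefficients. This is standard given the hypothesized $C^1([0,\infty);X)$ regularity together with the harmonicity of $\Phi_l$ on $\R^3\setminus B_1$, which yields automatic interior smoothness and convergence of the multipole series uniformly on compact subsets of $\{|y|>1\}$, so the manipulations are legitimate in the classical sense away from $\pd B_1$ and in the $L^2(\mathbb{S}^2)$ sense on the boundary.
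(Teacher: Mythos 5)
Your proposal is correct and follows essentially the same route as the paper's proof: differentiate the multipole expansion termwise in $r$, match spherical-harmonic coefficients in the kinematic boundary condition \eqref{eq1.3modified-a-lin-irro-0} to get \eqref{eq-dot-a}, and read \eqref{eq-Pl-expand} off the linearized Bernoulli relation \eqref{eq1.1modified-a-lin-irro-0}. The extra justification of the multipole form from harmonicity and the far-field decay, and of the termwise manipulations, is detail the paper simply assumes as part of the hypotheses.
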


\begin{proof}
Using the decompositions \eqref{eq-decompose-Phi-l} for $\Phi_l$ and \eqref{eq-decompose-R}  for $\mathcal{R}$, we have:
\begin{align*}
\partial_r\Phi_l(r,\theta,\varphi)\big|_{r=1} &=\sum_{\ell=0}^\infty \sum_{|m|\le\ell} b_\ell^m(t) [-(\ell+1)] Y_\ell^m(\th,\varphi),\quad {\rm and}\\
 \dot{\mathcal{R}}(\theta,\varphi,t), &= \sum_{\ell=0}^\infty \sum_{|m|\le\ell} \dot a_\ell^m(t)Y_\ell^m(\th,\varphi) 
\end{align*}
In view of the kinematic boundary condition \eqref{eq1.3modified-a-lin-irro-0}, equating these expressions yields  \eqref{eq-dot-a}.
The expression \eqref{eq-Pl-expand} follows from \eqref{eq1.1modified-a-lin-irro-0}.
This proves the proposition.
\end{proof}

We now consider the linearized Laplace--Young condition \eqref{eq1.3modified-b-lin-irro-0}. 
Evaluating \eqref{eq1.1modified-a-lin-irro-0} at $\pd B_1$ and using \eqref{eq1.3modified-b-lin-irro-0}, we get
\[
\pd_t\Phi_l \big|_{\pd B_1} = -\frac1{\rho_lR_*}\mathcal P_l \big|_{\pd B_1} = -\frac1{\rho_lR_*} \mathcal P_g(t) -\frac{\si}{\rho_l R_*^3}(2+\De_S)\mathcal R.
\]
Using the expansions \eqref{eq-decompose-R} and \eqref{eq-decompose-Phi-l} in the above equation gives
\[
\sum_{\ell=0}^\infty \sum_{|m|\le\ell} \dot b_\ell^mY_\ell^m 
= -\frac1{\rho_lR_*}\mathcal P_g(t)  + \frac{\si}{\rho_lR_*^3}  \sum_{\ell=0}^\infty \sum_{|m|\le\ell} a_\ell^m (\ell+2)(\ell-1)Y_\ell^m,
\]
where we've used the equation $-\De_{\mathbb{S}^2}Y_\ell^m = \ell(\ell+1)Y_\ell^m$.
Taking the $L^2(\mathbb{S}^2)$ inner product of the above equation with $Y_\ell^m$, $\ell=0,1,2,\ldots$, $|m|\le\ell$, we obtain the ordinary differential equations
 \begin{subequations}\label{eq-dot-b}
\begin{empheq}{align}
\dot b_0^0 &= - \frac{2\sqrt\pi}{\rho_lR_*} \mathcal P_g(t) - \frac{2\si}{\rho_lR_*^3}\, a_0^0,\label{eq-dot-b-a}\\
\dot b_\ell^m &= \frac{\si}{\rho_lR_*^3} (\ell+2)(\ell-1) a_\ell^m,\quad \ell\ge1.\label{eq-dot-b-b}
\end{empheq}\end{subequations}

Using \eqref{eq-dot-a} in \eqref{eq-dot-b}, we obtain that the monopole amplitude of $\mathcal{R}(\theta,\varphi,t)$, $a_0^0(t)$, satisfies the pressure-forced ODE:
\EQ{\label{eq-a-old-0mode}
\rho_lR_* \ddot a_0^0  - \frac{2\si}{R_*^2} a_0^0 = 2\sqrt\pi \mathcal P_g(t),
}
and that the shape modes amplitudes: $a_\ell^m(t)$, where $\ell\ge2$ and $|m|\le \ell$,  satisfy the equations: 
\EQ{\label{eq-a-old-hmode}
\rho_lR_* \ddot a_\ell^m + \frac{\si}{R_*^2} (\ell+2)(\ell+1)(\ell-1) a_\ell^m = 0,\quad \ell\ge2.
}

Clearly, 
for $\ell\ge2$, all solutions to the ODE \eqref{eq-a-old-hmode} merely oscillate; they exhibit no damping.
We note that this system
is equivalent to that derived in \cite[(6.5)]{Longuet-JFM1989} solved in \cite[(6.6)--(6.7)]{Longuet-JFM1989}:
\EQ{\label{eq-a-soln}
a_\ell^m(t) =  c_1\cos\bke{\sqrt{\frac{\si}{\rho_lR_*^3}(\ell+2)(\ell+1)(\ell-1)}\, t} + c_2\sin\bke{\sqrt{\frac{\si}{\rho_lR_*^3}(\ell+2)(\ell+1)(\ell-1)}\, t},
}
for some constants $c_1, c_2$.

It follows from \eqref{eq-dot-a} and \eqref{eq-a-soln} that, for $\ell\ge2$, $b_\ell^m(t)$ is a $(2\pi)/\sqrt{\frac{\si}{\rho_lR_*^3}(\ell+2)(\ell+1)(\ell-1)}$-periodic function. 
This proves \eqref{eq-shape-modes} by using the expression \eqref{eq-decompose-Phi-l}.

\medskip

While \eqref{eq-a-old-hmode} is a closed equation for the amplitudes $(a_\ell^m(t))$, where $\ell\ge2$ and $|m|\le \ell$, 
equation \eqref{eq-a-old-0mode} is coupled to the dynamics of the gas through the forcing term given by the gas pressure perturbation, $\mathcal P_g(t)$.

\subsection{Observing the bubble from the inside gas}\label{sec-inside}
%Consider the spherical harmonic expansion of the perturbations of the gas density, $\varrho$, and the velocity potential of the gas, $\mathcal{P}_g$:

%\textcolor{red}{[In Prop. \ref{prop-l=0} we are including a constraint of fixed mass.  Does it make sense to discuss this when we introduce the other constraint coming from centroid of mass coordinates? Also, indicate whether this answers the question of general mass perturbations for the asymmetric case\dots ]}

We now study the linearized dynamics inside the gas, beginning with the radial or monopole ($\ell=0$) contribution and then turning to the shape / multipole modes $\ell\ge2$.

\subsubsection{Radial / monopole $(\ell=0)$ linearized dynamics inside the gas}\label{sec:radial-dynamics}

Recall that
\EQN{ 
a_0^0(t)&=\left\langle Y_0^0,\mathcal R\right\rangle_{_{L^2(\mathbb{S}^2)}}(t) = \frac{1}{2\sqrt\pi}\int_{\mathbb S^2} \mathcal R(\cdot,\cdot,t),\quad \textrm{and}\quad \\
f_0^0(r,t)&=\left\langle Y_0^0, \varrho(r,\cdot,\cdot,t)\right\rangle_{_{L^2(\mathbb{S}^2)}}(t)= \frac{1}{2\sqrt\pi}\int_{\mathbb S^2}\varrho(r,\cdot,\cdot,t).
}

\begin{proposition}\label{prop-l=0}
Suppose $(\mathcal{R},\Phi_l, \varrho)$ is a solution of \eqref{eq1.1modified-lin-irro-0}--\eqref{eq-linear-mass-conserve} and has the expansions \eqref{eq-decompose-R}--\eqref{eq-Phig-decomposition}.
Then $(f_0^0(r,t),a_0^0(t))$ solves the system
 \begin{subequations}\label{eq-f00}
\begin{empheq}{align}
\pd_tf_0^0(r,t) &= \frac{\ka}{\ga c_v} \frac1{\rho_*R_*^2} \De_rf_0^0(r,t) + \frac1{\ga} \pd_tf_0^0(1,t),\quad 0\le r\le 1,\ t>0,\label{eq-f00-a}\\
\int_{B_1} f_0^0(|y|,t)\, dy &= -4\pi\frac{\rho_*}{R_*} a_0^0(t),\quad t>0,\label{eq-f00-b}\\
f_0^0(1,t) &= \frac1{\Rg T_\infty} \bke{-\frac{2\si}{R_*^2} a_0^0(t) + \rho_lR_*\ddot{a}_0^0(t)},\quad t>0.\label{eq-f00-c}
\end{empheq}
\end{subequations}
where $\De_r f_0^0 = \frac1{r^2}\pd_r(r^2\pd_r f_0^0)$ is the radial part of the Laplace operator in $\R^3$.
%provided the initial data $(f_0^0(r,0), a_0^0(0))$ satisfies \eqref{eq-f00-b}. 
%\textcolor{red}{[we should remark on focus on fixed mass constraint, and why?]}

In particular, since \eqref{eq-f00} coincides with the system in \cite[(2.6)]{LW-vbaslinear2023} when $\mu_l=0$, %\footnote{$\sup_{0\le r\le 1} |f_0^0(r,t)|$: \textbf{\color{red}We only have $L^2$ convergence since the initial data is only in $L^2$.}} 
$\int_{B_1} |f_0^0(|y|,t)|^2dy$, $a_0^0(t)$, and $\dot{a}_0^0(t)$ decay to zero at an exponential rate as $t\to\infty$, by \cite[Theorem 4.1]{LW-vbaslinear2023}.
%, proving Part (a) of Theorem \ref{thm-main} for the radial / monopole mode.

Moreover, by \cite[Remark 4.2]{LW-vbaslinear2023}, if $f_0^0(|y|,0)\in C^{2+2\al}_y(B_1)$ for some $\al\in(0,1/2)$, we have that $\norm{f_0^0(|y|,t)}_{C^{2+2\al}_y(B_1)}$, $\ddot{a}_0^0(t)$, and $\dddot{a}_0^0(t)$ decay to zero at an exponential rate as $t\to\infty$.
\end{proposition}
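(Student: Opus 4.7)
My plan is to project each of the gas--side equations \eqref{eq1.2modified-a-lin-irro-0}, \eqref{eq1.2modified-b-lin-irro-0} and the boundary condition \eqref{eq1.3modified-c-lin-irro-0} onto the constant spherical harmonic $Y_0^0$, and then use the ODE \eqref{eq-a-old-0mode} for $a_0^0(t)$ (derived from the liquid side in Section \ref{sec-outside}) to eliminate the spatially uniform gas pressure perturbation $\mathcal P_g(t)$. Because $Y_0^0$ is constant, its $L^2(\mathbb{S}^2)$ inner product commutes with the radial-Laplacian structure of $\De_y$ on the $\ell=0$ mode, so each projection is clean.

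To obtain \eqref{eq-f00-a}, I would take $\bka{Y_0^0,\cdot}_{L^2(\mathbb S^2)}$ of \eqref{eq1.2modified-b-lin-irro-0}. The Laplacian on the right hand side contributes exactly $\De_r f_0^0$ (no angular piece survives), while the spatially constant forcing $\pd_t\mathcal P_g$ contributes $\bka{Y_0^0,1}_{L^2(\mathbb S^2)}\pd_t\mathcal P_g = 2\sqrt\pi\,\pd_t\mathcal P_g$. Separately, projecting the interface equation of state \eqref{eq1.3modified-c-lin-irro-0} onto $Y_0^0$ yields $\mathcal P_g(t) = \tfrac{\Rg T_\infty}{2\sqrt\pi} f_0^0(1,t)$. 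Substituting and using the equilibrium identity $p_*=\Rg T_\infty\rho_*$, the prefactors combine to give precisely $\tfrac{1}{\ga}\pd_t f_0^0(1,t)$, which is \eqref{eq-f00-a}.

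For the mass identity \eqref{eq-f00-b}, I would integrate the continuity equation \eqref{eq1.2modified-a-lin-irro-0} over $B_1$. The divergence theorem converts $\int_{B_1}\De\Phi_g\,dy$ into a boundary flux, and the kinematic condition \eqref{eq1.3modified-a-lin-irro-0} replaces $\pd_r\Phi_g$ by $\pd_t\mathcal R$. Orthogonality of the $Y_\ell^m$ reduces the surface integral to a multiple of $\dot a_0^0$, so a suitable linear combination of $\int_{B_1}\varrho\,dy$ and $(\rho_*/R_*)a_0^0(t)$ is conserved in time; combining this conservation with the initial mass constraint \eqref{eq-linear-mass-conserve} and the elementary identity relating $\int_{B_1}\varrho\,dy$ to $\int_{B_1}f_0^0(|y|)\,dy$ via spherical harmonic orthogonality yields \eqref{eq-f00-b}. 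For \eqref{eq-f00-c}, I would solve \eqref{eq-a-old-0mode} to express $\mathcal P_g(t) = \tfrac{1}{2\sqrt\pi}\bke{\rho_l R_*\ddot a_0^0 - \tfrac{2\si}{R_*^2}a_0^0}$ and equate this with $\mathcal P_g(t) = \tfrac{\Rg T_\infty}{2\sqrt\pi} f_0^0(1,t)$ from the equation of state; solving for $f_0^0(1,t)$ gives the claim.

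For the asymptotic decay, I would not argue from scratch. Once \eqref{eq-f00} is established, the coupled system for $(f_0^0,a_0^0)$ coincides with the radial linearized system \cite[(2.6)]{LW-vbaslinear2023} in the $\mu_l=0$ case, so the exponential decay of $\int_{B_1}|f_0^0|^2\,dy$, $a_0^0(t)$, and $\dot a_0^0(t)$ follows immediately from \cite[Theorem 4.1]{LW-vbaslinear2023}, and the higher-regularity statement from \cite[Remark 4.2]{LW-vbaslinear2023}. The main obstacle is purely bookkeeping: tracking the normalization factors consistently (in particular $\bka{Y_0^0,1}_{L^2(\mathbb S^2)}=2\sqrt\pi$ and the $4\pi$ versus $2\sqrt\pi$ distinction between $\int_{B_1}\varrho\,dy$ and $\int_{B_1}f_0^0(|y|)\,dy$), and verifying that the two independent expressions for $\mathcal P_g(t)$ (liquid-side momentum balance and gas-side equation of state) are mutually consistent when combined to close the $\ell=0$ system.
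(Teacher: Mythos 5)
Your proposal is correct and follows essentially the same route as the paper: project \eqref{eq1.2modified-b-lin-irro-0}, \eqref{eq1.2modified-a-lin-irro-0} and \eqref{eq1.3modified-c-lin-irro-0} onto $Y_0^0$, use $2\sqrt\pi\,\mathcal P_g(t)=\Rg T_\infty f_0^0(1,t)$ together with \eqref{eq-a-old-0mode} to close the system, integrate the continuity equation against the mass constraint for \eqref{eq-f00-b}, and quote \cite[Theorem 4.1]{LW-vbaslinear2023} and \cite[Remark 4.2]{LW-vbaslinear2023} for the decay. The normalization bookkeeping you flag is exactly the only delicate point, and your constants ($\bka{Y_0^0,1}_{L^2(\mathbb S^2)}=2\sqrt\pi$, $p_*=\Rg T_\infty\rho_*$) are handled as in the paper.
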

\begin{proof}
Using that $Y^0_0=1/(2\sqrt\pi)$, we express \eqref{eq1.3modified-c-lin-irro-0} in the form $2\sqrt{\pi}\mathcal{P}_g Y_0^0 = \Rg T_\infty\varrho$. Then,  projecting this equation onto the radial mode, we obtain
\[2\sqrt{\pi}\mathcal{P}_g(t) = \Rg T_\infty f_0^0(1,t).\]
Plugging this relation into \eqref{eq-a-old-0mode} yields \eqref{eq-f00-c}.

Next, we project \eqref{eq1.2modified-b-lin-irro-0} onto the radial mode to get
\EQN{
\pd_t f_0^0(r,t) &= \frac{\ka}{\ga c_v} \frac1{\rho_*R_*^2} \De_r f_0^0(r,t) + \frac{\rho_*}{\ga p_*} 2\sqrt{\pi}\pd_t\mathcal{P}(t)\\
&= \frac{\ka}{\ga c_v} \frac1{\rho_*R_*^2} \De_r f_0^0(r,t) + \frac1{\ga} \pd_tf_0^0(1,t),\ 0\le r\le1,\ t>0,
}
which is \eqref{eq-f00-a}.

Moreover, multiplying  \eqref{eq1.2modified-a-lin-irro-0} by $Y_0^0$  and integrating the equation over $B_1$, we get
\EQ{\label{eq-dt-linear-mass}
0 &= \int_{B_1} \pd_tf_0^0(|y|,t)\, dy + \frac{\rho_*}{R_*} \int_{B_1} \De\Psi_0^0(|y|,t)\, dy\\
&= \int_{B_1} \pd_tf_0^0(|y|,t)\, dy + \frac{\rho_*}{R_*} \int_{\pd B_1} \pd_r\Psi_0^0\, dS\\
&= \int_{B_1} \pd_tf_0^0(|y|,t)\, dy + 4\pi\, \frac{\rho_*}{R_*} \dot{a}_0^0,
}
where we have used the projection of the equation \eqref{eq1.3modified-a-lin-irro-0} onto the radial mode: $\pd_r\Psi_0^0 = \dot{a}_0^0$ in the last equation.
Integrating \eqref{eq-dt-linear-mass} over time and using \eqref{eq-linear-mass-conserve} yield \eqref{eq-f00-b}.
This completes the proof of the proposition.
\end{proof}

By Proposition \ref{prop-l=0} and \eqref{eq-dot-a}, $b_0^0(t)$ decays exponentially as $t\to\infty$. 
Moreover, $\sup_{r\ge1} \Phi_l(r,\th,\varphi,t) = \Phi_l|_{r=1}$ by the expression \eqref{eq-decompose-Phi-l}.
Thus,
\EQN{
\sup_{r\ge1} \bka{Y_0^0, \Phi_l(r,\cdot,\cdot,t)}_{L^2(\mathbb{S}^2)} 
&= \sup_{r\ge1} \bka{Y_0^0, \Phi_l|_{r=1}}_{L^2(\mathbb{S}^2)}\\
&= b_0^0(t)\ \text{ decays to zero at an exponential rate as $t\to\infty$}.
}
This, together with Proposition \ref{prop-l=0}, yields \eqref{eq-radial-mode}.

\subsubsection{Shape / multipole $(\ell\ge2)$ linearized dynamics inside the gas}\label{sec:shape-dynamics}
\begin{proposition}
Suppose $(\mathcal{R},\Phi_l,\varrho)$ solves \eqref{eq1.1modified-lin-irro-0}--\eqref{eq1.3modified-lin-irro-0} and has the expansions \eqref{eq-decompose-R}--\eqref{eq-Phig-decomposition}.
Then, for $\ell\ge2$, $\varrho_\ell^m$ is a solution to the Dirichlet problem of the heat equation
\EQ{\label{eq-varrho-heat}
\pd_t\varrho_\ell^m &= \frac{\ka}{\ga c_v} \frac1{\rho_*R_*^2} \De\varrho_\ell^m,\quad\text{ in } B_1,\\
\varrho_\ell^m &= 0,\qquad\qquad\qquad\quad\text{ on } \pd B_1.
}
%Moreover, for $\ell\ge2$, $(\varrho_\ell^m, (\Phi_g)_\ell^m)$ satisfies 
%\EQ{\label{eq-pdt-De-Phig}
%\pd_t\varrho_\ell^m + \frac{\rho_*}{R_*} \De(\Phi_g)_\ell^m = 0,\quad\text{ in } B_1.
%}
As a consequence, thanks to the classical result for the heat equation, we have \eqref{eq-shape-modes-rho}. 
\end{proposition}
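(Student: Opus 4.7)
The plan is to derive \eqref{eq-varrho-heat} by projecting \eqref{eq1.2modified-b-lin-irro-0} onto the spherical harmonic $Y_\ell^m$ for $\ell\ge 2$ (the same argument in fact works for all $\ell\ge 1$), using two structural features of the isobaric approximation: (i) the forcing $\pd_t\mathcal P_g(t)$ in the bulk equation is spatially constant, so it is orthogonal to every $Y_\ell^m$ with $\ell\ge 1$; and (ii) the interface condition \eqref{eq1.3modified-c-lin-irro-0} forces $\varrho|_{\pd B_1}$ to be spatially constant, so its projection onto $Y_\ell^m$ with $\ell\ge 1$ also vanishes.

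First I would take the $L^2(\mathbb S^2)$ inner product of \eqref{eq1.2modified-b-lin-irro-0} with $Y_\ell^m(\th,\varphi)$. The inhomogeneity $\frac{\rho_*}{\ga p_*}\pd_t\mathcal P_g(t)$ depends only on $t$, so $\bka{Y_\ell^m,\pd_t\mathcal P_g}_{L^2(\mathbb S^2)}=0$ for all $\ell\ge 1$. Next, using the representation of the Laplacian in spherical coordinates, a direct calculation gives
\EQN{
\De\varrho_\ell^m \;=\; \De\bke{f_\ell^m(r,t)\, Y_\ell^m(\th,\varphi)}
\;=\; \bke{\frac1{r^2}\pd_r(r^2\pd_rf_\ell^m) \;-\; \frac{\ell(\ell+1)}{r^2} f_\ell^m} Y_\ell^m(\th,\varphi),
}
so the spherical-harmonic projection commutes with $\De$ in the sense that $\bka{Y_\ell^m,\De\varrho}_{L^2(\mathbb S^2)}\, Y_\ell^m=\De\varrho_\ell^m$. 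Multiplying the resulting scalar equation by $Y_\ell^m(\th,\varphi)$ produces exactly the PDE $\pd_t\varrho_\ell^m=\frac{\ka}{\ga c_v}\frac{1}{\rho_*R_*^2}\De\varrho_\ell^m$ in $B_1$.

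For the Dirichlet boundary condition, I would evaluate \eqref{eq1.3modified-c-lin-irro-0} at the unit sphere: $\mathcal P_g(t)=\Rg T_\infty\, \varrho|_{\pd B_1}$. Since $\mathcal P_g(t)$ is independent of $(\th,\varphi)$, so is $\varrho|_{\pd B_1}$; thus its projection onto any $Y_\ell^m$ with $\ell\ge 1$ (which is orthogonal to the constants, i.e.\ to $Y_0^0$) vanishes. Consequently $f_\ell^m(1,t)=0$ and hence $\varrho_\ell^m=0$ on $\pd B_1$, as claimed. The exponential decay in \eqref{eq-shape-modes-rho} then follows from the classical spectral theory of the Dirichlet Laplacian on the ball $B_1$, whose eigenvalues are bounded below by a positive constant, so $\|\varrho_\ell^m(\cdot,t)\|_{L^2(B_1)}$ decays at least like $e^{-\la_1 t}$.

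There is no real obstacle here; the only thing to keep track of is that the uniform-pressure hypothesis (both in the bulk equation and on the interface) ensures that both the forcing and the boundary value lie entirely in the $\ell=0$ mode, which is why the shape modes $\ell\ge 2$ satisfy the homogeneous Dirichlet heat equation decoupled from the pressure dynamics. The $\ell=1$ case, although formally covered by the same argument, is already pinned down by the centroid-of-volume condition \eqref{eq-linear-centroid} and is therefore treated separately.
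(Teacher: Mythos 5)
Your proposal is correct and follows essentially the same route as the paper: the paper's proof is a one-sentence version of exactly this projection argument, noting that $\mathcal P_g=\mathcal P_g(t)$ kills the forcing term (and, via \eqref{eq1.3modified-c-lin-irro-0}, the boundary value) in every mode $\ell\ge1$. Your additional details — the commutation of the projection with $\De$ and the spectral-gap argument for the exponential decay in \eqref{eq-shape-modes-rho} — are accurate elaborations of what the paper leaves implicit.
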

\begin{proof}
Since $\mathcal{P}_g = \mathcal{P}_g(t)$, the projection of equations \eqref{eq1.2modified-b-lin-irro-0} onto $Y^m_\ell$ for $\ell\ge2$ gives the desired equations.
\end{proof}

\subsubsection{Velocity potential of the gas flow}
%\textcolor{red}{[What in the proof of the well-posedness theorem is being verified here?]}\\
Projecting the system \eqref{eq-Phi-g-Neumann} onto the modes $\ell=0$ and $\ell\ge2$, and using the fact that the projections commute with $\De$, $\pd_t$, and $\pd_r$, we have that $(\Phi_g)_\ell^m$ satisfies the same Neumann problem:
\EQ{\label{eq-Phi-g-Neumann-modes}
-\De(\Phi_g)_\ell^m &= \frac{R_*}{\rho_*} \pd_t\varrho_\ell^m,\quad\ \text{ in } B_1,\\
\pd_r(\Phi_g)_\ell^m &= \dot{a}_\ell^m(t)Y_\ell^m,\quad\text{ on } \pd B_1.
}
Solving \eqref{eq-Phi-g-Neumann-modes} by using the Green's formula, we have
\[
(\Phi_g)_\ell^m(y,t) = \int_{B_1} N(y,z) \bkt{\frac{R_*}{\rho_*} \pd_t\varrho_\ell^m(z,t)} dz + \int_{\pd B_1} N(y,z) \dot{a}_\ell^m(t)Y_\ell^m(z)\, dS_z,
\]
where $N(y,z)$ is the Neumman-Green function for $-\De$ in $B_1$.
%\\
%\textcolor{red}{[Should the kernel in the volume integral should be $G(y,z)$, the Green's function, and in the surface integral $N(y,z)$, the normal derivative of $G(y,z)$ on $\pd B_1$? ]}\\

For the monopole / radial mode ($\ell=0$), it follows from \eqref{eq-f00-a} and \eqref{eq-f00-c} that 
\EQN{
\pd_t\varrho_0^0 &=\bkt{\frac{\ka}{\ga c_v}\frac1{\rho_*R_*^2}\De_rf_0^0 + \frac1{\ga}\pd_tf_0^0(1,t)}Y_0^0\\
&=\bkt{\frac{\ka}{\ga c_v}\frac1{\rho_*R_*^2}\De_rf_0^0 + \frac1{\ga\Rg T_\infty} \bke{-\frac{2\si}{R_*^2}\dot{a}_0^0(t) + \rho_lR_*\dddot{a}_0^0(t)}}Y_0^0
}
which decays exponentially to zero uniformly in $B_1$ as $t\to\infty$ by Proposition \ref{prop-l=0}.

For the multipole / shape modes $(\ell\ge2)$, by differentiating \eqref{eq-varrho-heat} with respect to time, it follows that $\pd_t\varrho$ is a solution of the Dirichlet problem of the heat equation.
The standard result for the heat equation (see Lemma \ref{lem-unif-decay-heat}) then implies that, as time advances, $\pd_t\varrho_\ell^m\to0$ uniformly in $B_1$, $\ell\ge2$.

Therefore, we have
\[
\abs{ (\Phi_g)_\ell^m(y,t) - \dot{a}_\ell^m(t) \int_{\pd B_1} N(y,z) Y_\ell^m(z)\, dS_z,
} \lec \int_{B_1} |N(y,z)| | \pd_t\varrho_\ell^m(z,t)| dz \to 0,\ \text{ as } t\to\infty,\]
and 
\[
\abs{ \nb(\Phi_g)_\ell^m(y,t) - \dot{a}_\ell^m(t) \int_{\pd B_1} \nb_y N(y,z) Y_\ell^m(z)\, dS_z,
} \lec \int_{B_1} |\nb_y N(y,z)| | \pd_t\varrho_\ell^m(z,t)| dz \to 0,\ \text{ as } t\to\infty,\]
where we've used the fact that $N(y,\cdot), \nb_yN(y,\cdot)\in L^1(B_1)$ for all $y\in\overline{B_1}$ from the formula \cite[(1.10)]{STT-EMJ2016}
\[
N(y,z) \sim \frac1{|z-y|} + \frac1{\abs{y|z|-\frac{z}{|z|}}} - \ln\abs{1-\bka{y,z}+\Big|y|z|-\frac{z}{|z|}\Big|},\quad \bka{y,z} = y_1z_1 + y_2z_2 + y_3z_3.
\]
These prove \eqref{eq-radial-mode-Phi-g} and \eqref{eq-shape-modes-Phi-g}.

The far-field conditions \eqref{eq-far-field-0} are verified in Appendix \ref{sec-far-field}, completing the proof of Theorem \ref{thm-wellposed}.\qed
%where $a_\ell^m$ is given by the ODE \eqref{eq-a-old-hmode}.
%Roughly speaking, $(\Phi_g)_\ell^m$ and $\pd_i(\Phi_g)_\ell^m(y,t)$, $i=1,2,3$, becomes more and more harmonic as time advances while the Neumann data is given by $\dot{a}_\ell^m$.

%When viscosity is present, $\mu_l>0$, we have $\dot{a}_\ell^m(t)= - (\ell+1) b_\ell^m(t) \equiv 0$
%%$\dot{a}_\ell^m(t)\to0$ 
%so that both $(\Phi_g)_\ell^m$ and $\nb(\Phi_g)_\ell^m$ vanish as time goes to infinity.
%On the other hand, in the inviscid case, $\mu_l=0$, $\dot{a}_\ell^m(t)$ oscillates and does not converge.

\begin{remark}\label{rem:no-flux} We claim that the shape / multipole-mode of the temperature perturbations have zero flux across the boundary.
 Recall the gas temperature $T_g$ is given by $T_g = \frac{p_g}{\Rg} \frac1{\rho_g} = T_\infty + \de\mathcal{T}_g + O(\de^2)$, where 
\[
\mathcal{T}_g = \frac{T_\infty}{p_*} \mathcal{P}_g - \frac{T_\infty}{\rho_*}\varrho.
\]
Consider the spherical harmonic expansion of the perturbation of the temperature
\[
\mathcal{T}_g(r,\th,\varphi,t) = \sum_{\ell=0}^\infty \sum_{|m|\le\ell} (\mathcal{T}_g)_\ell^m(r,\th,\varphi,t),
\]
where $(\mathcal{T}_g)_\ell^m$ is the orthogonal projection of $\mathcal{T}_g$ onto the subspace of $Y_\ell^m$.

For $\ell\ge2$, $(\mathcal{T}_g)_\ell^m= -\frac{T_\infty}{\rho_*}\varrho_\ell^m$. So, by  \eqref{eq-Phi-g-Neumann-modes}
\EQN{
\int_{\pd B_1} \hat{\bf n}\cdot\nb(\mathcal{T}_g)_\ell^m\,dS &= \int_{B_1} \De(\mathcal{T}_g)_\ell^m\, dx
= -\frac{T_\infty}{\rho_*} \int_{B_1} \De \varrho_\ell^m\, dx\\
&= -\frac{T_\infty \ga c_vR_*^2}{\ka} \int_{B_1} \pd_t\varrho_\ell^m\, dx
= \frac{T_\infty \ga c_vR_*\rho_*}{\ka}  \int_{B_1}  \De(\Phi_g)_\ell^m\, dx\\
&= \frac{T_\infty \ga c_vR_*\rho_*}{\ka}  \int_{\pd B_1}  \pd_r(\Phi_g)_\ell^m\, dS
= \frac{T_\infty \ga c_vR_*\rho_*}{\ka}  \dot{a}_\ell^m(t) \int_{\pd B_1}  Y_\ell^m\, dS = 0.
}
\end{remark}

\section{Ill-posedness of the viscous linearized system: Proof of Theorem \ref{thm-linear-illposed}}\label{sec:linear-illposed}

In this section we prove Theorem \ref{thm-linear-illposed}, on the linear ill-posedness for the irrotational and viscous case.
The key observation is that irrotationality and viscosity of the liquid
%fluid (\textcolor{red}{do we also use irrotationality of the gas?}) 
constrain the liquid velocity potential, $\Phi_l$, to be a radial function.

\begin{proposition}\label{prop-Phil-radial}
If $\mu_l>0$.
Let $(\mathcal{R},\Phi_l,\varrho,\Phi_g)\in C^1([0,\infty);X)$ be a solution of the linearized system \eqref{eq1.1modified-lin-irro-0}--\eqref{eq-far-field-0}. 
Then $\Phi_l = \Phi_l(r,t)$ is independent of $\th$ and $\varphi$.
\end{proposition}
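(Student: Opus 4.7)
The plan is to exploit the two tangential components of the linearized stress balance, namely \eqref{eq1.3modified-b1-lin-irro-1}--\eqref{eq1.3modified-b2-lin-irro-1}, which are active precisely because $\mu_l>0$. These can be rewritten as
\[
\partial_\theta\bigl(\partial_r\Phi_l-\Phi_l\bigr)\big|_{\partial B_1}=0, \qquad \partial_\varphi\bigl(\partial_r\Phi_l-\Phi_l\bigr)\big|_{\partial B_1}=0,
\]
so the boundary trace of $(\partial_r-1)\Phi_l$ is a function of $t$ only; equivalently, only its $Y_0^0$ spherical-harmonic component is nonzero on $\partial B_1$.

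Next I would insert the multipole expansion \eqref{eq-decompose-Phi-l},
\[
\Phi_l(r,\theta,\varphi,t)=\sum_{\ell=0}^\infty\sum_{|m|\le\ell}b_\ell^m(t)\,r^{-(\ell+1)}Y_\ell^m(\theta,\varphi),
\]
which is available because $\Phi_l\in X$ (so $\Delta\Phi_l=0$ in $\mathbb{R}^3\setminus B_1$ with the far-field decay \eqref{eq-far-field-0}). A direct calculation on $\partial B_1$ gives
\[
\bigl(\partial_r\Phi_l-\Phi_l\bigr)\big|_{r=1}=-\sum_{\ell=0}^\infty\sum_{|m|\le\ell}(\ell+2)\,b_\ell^m(t)\,Y_\ell^m(\theta,\varphi).
\]
Matching this against the fact (from the tangential conditions) that the left-hand side has only an $\ell=0$ component, and using orthogonality of $\{Y_\ell^m\}$ in $L^2(\mathbb{S}^2)$, I conclude $(\ell+2)\,b_\ell^m(t)=0$ for every $\ell\ge 1$ and all $|m|\le\ell$, hence $b_\ell^m(t)\equiv 0$ whenever $\ell\ge 1$. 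Therefore
\[
\Phi_l(r,\theta,\varphi,t)=\frac{b_0^0(t)}{2\sqrt{\pi}\,r},
\]
which is independent of $\theta,\varphi$, proving the proposition.

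The remaining claim of Theorem \ref{thm-linear-illposed}, that $\mathcal{R}=\mathcal{R}(t)$ as well, follows by combining the above with the kinematic identity \eqref{eq-dot-a}: since $b_\ell^m\equiv 0$ for $\ell\ge 1$, one gets $\dot{a}_\ell^m\equiv 0$, so each $a_\ell^m$ is constant in $t$; continuity up to $t=0$ then allows one to read off these constants from the initial data, and the centroid condition \eqref{eq-linear-centroid} kills $\ell=1$. (For $\ell\ge 2$ one would then, in the theorem proper, argue that these constant $a_\ell^m$ must actually vanish by revisiting \eqref{eq-a-old-hmode} with $b_\ell^m\equiv 0$; but the proposition itself only requires the argument for $\Phi_l$.)

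The only subtlety I foresee is justifying termwise differentiation of the multipole series at $r=1$ and the termwise use of orthogonality, but both are standard given $\Phi_l\in C^1([0,\infty);X)$ and the harmonicity plus decay that define $X$; the expansion converges in $L^2(\mathbb{S}^2)$ at every fixed $r>1$, and the two boundary derivatives $\partial_\theta,\partial_\varphi$ applied to a harmonic function with finite multipole energy produce convergent expansions in the same basis. No serious obstacle; the step that does the work is purely algebraic: the operator $\partial_r-1$ acts on the $\ell$-th exterior multipole by multiplication by $-(\ell+2)\neq 0$ for $\ell\ge 1$, which is what kills all non-radial modes.
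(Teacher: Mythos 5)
Your proof is correct, but it takes a genuinely different route from the paper's. You read the tangential stress conditions \eqref{eq1.3modified-b1-lin-irro-1}--\eqref{eq1.3modified-b2-lin-irro-1} as saying that the trace of $(\pd_r-1)\Phi_l$ on $\pd B_1$ is angularly constant, and then diagonalize: on the $\ell$-th exterior multipole $b_\ell^m r^{-(\ell+1)}Y_\ell^m$ the operator $\pd_r-1$ acts at $r=1$ by multiplication by $-(\ell+2)\neq 0$, so orthogonality of the $Y_\ell^m$ kills every $b_\ell^m$ with $\ell\ge1$. The paper instead runs an energy argument: it integrates $(\De\pd_\varphi\Phi_l)\pd_\varphi\Phi_l$ and $(\pd_\th\De\Phi_l)\pd_\th\Phi_l$ over $\R^3\setminus B_1$, uses harmonicity, the far-field decay \eqref{eq-far-field-0}, and the same tangential conditions to turn the boundary terms into nonnegative quantities, and concludes $\pd_\varphi\Phi_l\equiv\pd_\th\Phi_l\equiv0$ without ever invoking the expansion \eqref{eq-decompose-Phi-l}. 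Your version is shorter and more transparent, and it meshes naturally with Section 6, where the coefficients $b_\ell^m$ are already the working variables; its only extra commitment is the validity of the multipole expansion and its termwise differentiation at $r=1$, which you correctly flag and which the paper itself assumes elsewhere (indeed the paper's own proof of Theorem \ref{thm-linear-illposed} projects onto the $Y_\ell^m$ immediately after the proposition). The paper's integration-by-parts route avoids that convergence issue but pays for it with a delicate spherical-coordinate computation, including the pole terms at $\th=0,\pi$; both arguments need the same boundary regularity to make sense of the tangential conditions in the first place. Your closing remarks about $\mathcal{R}$ go beyond the proposition but are consistent with how the paper completes Theorem \ref{thm-linear-illposed}.
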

\begin{proof} 
The proof follows from considerations which involve the property that $\Delta_y\Phi_l=0$ for $|y|>1$ and the boundary conditions \eqref{eq1.3modified-b1-lin-irro-1} and  \eqref{eq1.3modified-b2-lin-irro-1}, which hold if $\mu_l>0$.
For readability, and since it plays no role in the argument,  we suppress the time dependence.
By employing \eqref{eq-far-field-0} to drop the boundary term at infinity and using \eqref{eq1.3modified-b2-lin-irro-1} in the last equation and the fact that $\De$ commutes with $\pd_\varphi$, we have
\EQN{
0 &= \int_{\mathbb{R}^3\setminus B_1} (\De\pd_\varphi\Phi_l)(\pd_\varphi\Phi_l)\, dx
= - \int_{\mathbb{R}^3\setminus B_1} |\nb(\pd_\varphi\Phi_l)|^2\, dx
- \int_{\pd B_1} \pd_r(\pd_\varphi\Phi_l)(\pd_\varphi\Phi_l) dS\\
&= - \int_{\mathbb{R}^3\setminus B_1} |\nb(\pd_\varphi\Phi_l)|^2\, dx
- \int_{\pd B_1} (\pd_\varphi\Phi_l)^2 dS,
}
%\textcolor{red}{[Why can we drop the boundary term at infinity? Do we need an argument like in the proof of  Theorem \ref{thm-irro-equilib}, where in this case we'd need to show that $\pd_\varphi\Phi_l$ tends to zero uniformly at infinity? If so, we can refer to this argument without providing details. ]}
which implies $\pd_\varphi\Phi_l \equiv \text{constant} = \pd_\varphi\Phi_l\big|_{\pd B_1} \equiv 0$.
Hence, $\Phi_l = \Phi_l(r,\th)$. \\
Further,  using  that $\pd_\varphi\Phi_l\equiv 0$, we have
\EQN{
0 &= \int_{\R^3\setminus B_1} (\pd_\th\De\Phi_l)(\pd_\th\Phi_l)\, dx \\
&= \int_0^{2\pi} \int_0^\pi \int_1^\infty \pd_\th\bkt{\frac1{r^2}\pd_r(r^2\pd_r\Phi_l) + \frac1{r^2\sin\th}\pd_\th(\sin\th\pd_\th\Phi_l) + \frac1{r^2\sin^2\th}\pd_\varphi^2\Phi_l} (\pd_\th\Phi_l) r^2 \sin\th\, dr d\th d\varphi\\
&= \int_0^{2\pi} \int_0^\pi \int_1^\infty \pd_r(r^2\pd_\th\pd_r\Phi_l) (\pd_\th\Phi_l)\sin\th\, dr d\th d\varphi \\
&\quad+ \int_0^{2\pi} \int_0^\pi \int_1^\infty \pd_\th\bkt{\frac1{\sin\th}\pd_\th(\sin\th\pd_\th\Phi_l)}(\pd_\th\Phi_l)\sin\th\, dr d\th d\varphi.
}
Integrating by parts, we obtain
\EQN{
0 &= - \int_0^{2\pi} \int_0^\pi \int_1^\infty (\pd_\th\pd_r\Phi_l)^2 r^2\sin\th\, dr d\th d\varphi 
- \int_0^{2\pi} \int_0^\pi \bkt{ (\pd_\th\pd_r\Phi_l)(\pd_\th\Phi_l)} \big|_{r=1} \sin\th\, d\th d\varphi\\
&\quad - \int_0^{2\pi} \int_0^\pi \int_1^\infty \frac1{\sin\th} \bkt{\pd_\th(\sin\th\pd_\th\Phi_l)}^2 dr d\th d\varphi 
+ \int_0^{2\pi} \int_1^\infty \bkt{ \pd_\th(\sin\th\pd_\th\Phi_l)(\pd_\th\Phi_l) }\big|_{\th=0}^{\th=\pi}\, dr d\varphi \\
&= - \int_0^{2\pi} \int_0^\pi \int_1^\infty (\pd_\th\pd_r\Phi_l)^2 r^2\sin\th\, dr d\th d\varphi 
- \int_0^{2\pi} \int_0^\pi \bke{\pd_\th\Phi_l\big|_{r=1}}^2  \sin\th\, d\th d\varphi\\
&\quad - \int_0^{2\pi} \int_0^\pi \int_1^\infty \frac1{\sin\th} \bkt{\pd_\th(\sin\th\pd_\th\Phi_l)}^2 dr d\th d\varphi 
- \int_0^{2\pi} \int_1^\infty \bkt{ \bke{\pd_\th\Phi_l\big|_{\th=\pi}}^2 + \bke{\pd_\th\Phi_l\big|_{\th=0}}^2} dr d\varphi,
}
where we've used \eqref{eq1.3modified-b1-lin-irro-1} and that $\bkt{ \pd_\th(\sin\th\pd_\th\Phi_l)(\pd_\th\Phi_l) }\big|_{\th=0}^{\th=\pi} = \bkt{ \cos\th(\pd_\th\Phi_l)^2 + \sin\th\pd_\th^2\Phi_l\pd_\th\Phi_l }\big|_{\th=0}^{\th=\pi} = -\bke{\pd_\th\Phi_l\big|_{\th=\pi}}^2 - \bke{\pd_\th\Phi_l\big|_{\th=0}}^2$.
Thus, we get $\pd_\th\pd_r\Phi_l \equiv 0 $ and $\pd_\th\Phi_l\big|_{r=1}\equiv 0$.
So, $\pd_\th\Phi_l \equiv \pd_\th\Phi_l\big|_{r=1} \equiv 0$.

Therefore, we have $\Phi_l = \Phi_l(r)$ and complete the proof the proposition.
\end{proof}

We are now ready to prove the linear ill-posedness, Theorem \ref{thm-linear-illposed}.

\begin{proof}[Proof of Theorem \ref{thm-linear-illposed}]
If $\mu_l>0$, then $\Phi_l$ is a radial function by Proposition \ref{prop-Phil-radial}.
Recall $\Phi_l$ has the decomposition \eqref{eq-decompose-Phi-l}.
By projecting $\Phi_l$ onto the shape / multipole modes, we have that the coefficients $b_\ell^m(t) \equiv 0$ for $\ell\ge1$.
Keeping the viscosity $\mu_l$ in \eqref{eq1.3modified-lin-irro-0} and following the same approach deriving \eqref{eq-dot-b-b}, we have
\[
\dot b_\ell^m = -\frac{2\mu_l}{\rho_lR_*^2}(\ell+1)(\ell+2)b_\ell^m + \frac{\si}{\rho_lR_*^3} (\ell+2)(\ell-1) a_\ell^m,\quad \ell\ge1.
\]
That $b_\ell^m(t)\equiv0$ implies that $a_\ell^m(t)\equiv0$ for all $\ell\ge2$, $|m|\le\ell$. 
Further, by  \eqref{eq-linear-centroid}
we have $a_\ell^m(t)\equiv0$ for all $\ell\ge1$, $|m|\le\ell$.
In other words, $\mathcal{R}(\th,\varphi,t) = a_0^0(t) Y_0^0(\th,\varphi) = (2\sqrt\pi)^{-1} a_0^0(t)$ is a radial function.
This yields a contradiction, unless the initial data are radially symmetric. The proof of  Theorem \ref{thm-linear-illposed} is now complete.
\end{proof}

\section{Nonlinear ill-posedness of the viscous irrotational system (\ref{eq1.1simplified-red-irro})--(\ref{eq-far-field-all-irro})}\label{sec-illposed}

Consider the nonlinear, viscous irrotational system \eqref{eq1.1simplified-red-irro}--\eqref{eq-far-field-all-irro}, $\mu_l>0$, with initial bubble surface $\pd\Om(0) = \pd B_{R_*} + \mathcal{R}(\cdot,0)$, where $\mathcal{R}(\cdot,0)$ is small.
Upon a change of variables that first maps the Eulerian coordinates to the Lagrangian coordinates and then maps $\pd B_{R_*} + \mathcal{R}(\cdot,0)$ to $\pd B_1$,
the system \eqref{eq1.1simplified-red-irro}--\eqref{eq-far-field-all-irro} is transformed to a system enclosed in $B_1$.

The transformed system can then be discussed in the function space $X$ defined in \eqref{eq-X}.
Write the transformed system in the following abstract formulation:
\EQ{\label{eq-abstract}
\pd_tu &= \mathcal{N}(u),\quad t>0,\\
u\big|_{t=0}&= u_0.
}
Let $u_*$ be any fixed equilibrium of \eqref{eq-abstract} (see \eqref{eq-equilibrium}), i.e., $\mathcal{N}(u_*)=0$.

\begin{definition}\label{def-well-posed}
We say the system \eqref{eq-abstract} is locally well-posed in $X$ if 
\EN{
\item For any initial data $u_0$, there exists a $T=T(u_0)>0$ such that \eqref{eq-abstract} has a unique solution $u(t)\in X$ for $t\in[0,T)$.
\item The solution $u$ is continuously differentiable with respect to the initial data $u_0$.
%The solution $u$ is continuous with respect to the initial data $u_0$.
}
\end{definition}

\medskip
\noindent{\bf Proof of Theorem \ref{thm-irro-illposed}.}
Suppose \eqref{eq-abstract} is locally well-posed in $X$ in the sense of Definition \ref{def-well-posed}.
Consider the initial data $u_0 = u_*+\ep v_0$, where $v_0$ is some nonspherically symmetric data, and denote the solution by $u(t;\ep)$.
Decompose $u(t;\ep)$ as 
\EQ{\label{eq-u-perturbation}
u(t;\ep) = u_* + \ep v(t;\ep).
}
Since $u(t;\ep)$ is continuously differentiable with respect to the initial data, we have $u(t;\ep)\to u_*$ as $\ep\to0$, which yields $\ep v(t;\ep)\to0$ as $\ep\to0$, and $\lim_{\ep\to0} \frac{d}{d\ep}u(t;\ep) = \lim_{\ep\to0}\bke{ v(t;\ep) + \ep\frac{d}{d\ep}v(t;\ep)}$ exists. 
These implies that, by the L'H\^opital's rule, the limit
\[
\lim_{\ep\to0} v(t;\ep) = \lim_{\ep\to0} \frac{\ep v(t;\ep)}{\ep} = \lim_{\ep\to0} \frac{v(t;\ep) + \ep \frac{d}{d\ep} v(t;\ep)}1 \text{ exists},
\]
which further yields $\lim_{\ep\to0} \ep\frac{d}{d\ep} v(t;\ep)=0$.

Let $T_\ep$ be the existence time of $u(t;\ep)$, i.e., $T_\ep = T(u_*+\ep v_0)$. 
Since that $u(t;\ep)\to u_*$ as $\ep\to0$ and that the existence time of $u(t;0) = u_*$ is $T_0 = \infty$, $T_\ep$ is bounded away from zero for small $\ep$.
Let $T>0$ be a lower bound of $T_\ep$ for $0\le\ep<1$.

Plugging \eqref{eq-u-perturbation} into the equation \eqref{eq-abstract}, differentiating the equation with respect to $\ep$, sending $\ep$ to zero and using all the convergence results obtained above, we obtain 
\EQ{\label{eq-linear-abstract}
\pd_tv(t;0) &= \mathcal{L}v(t;0),\quad t\in[0,T),\\
v(0;0)&= v_0,
}
where $\mathcal{L} = \mathcal{N}'(u_*)$ is the linearized operator.
According to the hypothesis for the contradiction argument, \eqref{eq-linear-abstract} admits a unique solution in $X$.
However, \eqref{eq-linear-abstract} is exactly the same as the linearized system \eqref{eq1.1modified-lin-irro-0}--\eqref{eq1.3modified-lin-irro-0}, which is ill-posed in $X$ by Theorem \ref{thm-linear-illposed}.
This leads to a contradiction, completing the proof of Theorem \ref{thm-irro-illposed}.

\appendix

\section{Frame of reference moving with the bubble centroid}\label{appdx-centroid-frame}
The centroid of volume of the bubble is given by
\[
\boldsymbol{\xi}(t) = \int_{\Om(t)} x\, dx.
\]
We express the dynamics in terms of coordinates moving with the centroid of the bubble, 
given by:
\[
x' = x - \boldsymbol{\xi}(t), 
\]
and hence 
\EQ{\label{eq-centroid=0}
\int_{\Om'(t)} x'\, dx' = {\bf 0},\quad \Om'(t) = \Om(t) - \boldsymbol{\xi}(t).
}
The fluid and gas state variables in this coordinate system are:
\EQN{
{\bf v}_l'(x',t) &= {\bf v}_l(x'+\boldsymbol{\xi}(t),t),\quad\,\, 
p_l'(x',t) = p_l(x'+\boldsymbol{\xi}(t),t),\\
{\bf v}_g'(x',t) &= {\bf v}_g(x'+\boldsymbol{\xi}(t),t),\quad
\rho_g'(x',t) = \rho_g(x'+\boldsymbol{\xi}(t),t),\quad\ \
p_g'(t) = p_g(t),\\
\boldsymbol{\om}'(t) &= \boldsymbol{\om}(t) - \boldsymbol{\xi}(t),\qquad\ \, 
\hat{\bf n}'(x',t) = \hat{\bf n}(x'+\boldsymbol{\xi}(t),t).
}

Then, the equations of reduced irrotational system, \eqref{eq1.1simplified-red-irro}--\eqref{eq1.3simplified-red-irro}, become
 \begin{subequations}\label{eq1.1modified-cm}
\begin{empheq}[right=\empheqrbrace\text{in $\R^3\setminus \Om'(t)$, $t>0$,}]{align}
(\pd_t - \dot{\boldsymbol{\xi}}\cdot\nb_{x'})\phi_l' + \frac{|\nb_{x'}\phi_l'|^2}2 =& - \frac{p_l'-p_{\infty,*}}{\rho_l}, \label{eq1.1modified-a-cm}\\
\De_{x'}\phi_l' =&\, 0, \label{eq1.1modified-b-cm}
\end{empheq}
\end{subequations}

{\small
\begin{subequations}\label{eq1.2modified-cm}
\begin{empheq}[right=\empheqrbrace\text{in $\Om'(t)$, $t>0$,}]{align}
&(\pd_t - \dot{\boldsymbol{\xi}}\cdot\nb_{x'})\rho_g' + \rho_g'\De_{x'}\phi_g' + \nb_{x'}\rho_g'\cdot\nb_{x'}\phi_g' = 0,\label{eq1.2modified-a-cm}\\
&(\pd_t - \dot{\boldsymbol{\xi}}\cdot\nb_{x'}) \rho_g' = \frac{\ka}{\ga c_v} \De_{x'}\log\rho_g' - \frac{\ka}{\ga c_v} \frac{|\nb_{x'}\rho_g'|^2}{(\rho_g')^2} - \nb_{x'}\phi_g'\cdot\nb_{x'}\rho_g' + \frac{\pd_tp_g'}{\ga p_g'}\rho_g', \label{eq1.2modified-b-cm}
\end{empheq}
\end{subequations}
}
and
 \begin{subequations}\label{eq1.3modified-cm}
\begin{empheq}[right=\empheqrbrace\text{on $\pd\Om'(t)$, $t>0$.}]{align}
\nb_{x'}\phi_l'(\boldsymbol{\om}',t)\cdot\hat{\bf n}' = \nb_{x'}\phi_g'(\boldsymbol{\om}',t)\cdot\hat{\bf n}' = \pd_t \boldsymbol{\om}' \cdot\hat{\bf n}' + \dot{\boldsymbol{\xi}}\cdot\hat{\bf n}', \label{eq1.3modified-a-cm}\\
p_g' - p_l' = \si (\nb_S\cdot \hat {\bf n}'), \label{eq1.3modified-b-cm}\\
p_g' = \Rg T_\infty\rho_g', \label{eq1.3modified-c-cm}
\end{empheq}
\end{subequations}

In addition to linearizing the system for the state variables via \eqref{eq-linearize-old}, we expand 
\[\boldsymbol{\xi} = \de\boldsymbol{\Xi} + O(\de^2).\]
Then we derive the linearized system 
\begin{subequations}\label{eq1.1modified-lin-irro-1-cm}
\begin{empheq}[right=\empheqrbrace\text{in $\R^3\setminus B_1$, $t>0$,}]{align}
\pd_t \Phi_l =& - \dfrac1{\rho_*R_*}\, \mathcal P_l, \label{eq1.1modified-a-lin-irro-1-cm}\\
\De\Phi_l =&\, 0,\quad %\crb{-\De\mathcal{P}_l = 0,} 
\label{eq1.1modified-b-lin-irro-1-cm}
\end{empheq}
\end{subequations}

 \begin{subequations}\label{eq1.2modified-lin-irro-1-cm}
\begin{empheq}[right=\empheqrbrace\text{in $B_1$, $t>0$,}]{align}
\pd_t \varrho + \frac{\rho_*}{R_*} \De\Phi_g =&\, 0,\qquad \mathcal{P}_g = \mathcal{P}_g(t),\label{eq1.2modified-a-lin-irro-1-cm}\\
\pd_t \varrho =&\, \dfrac{\ka}{\ga c_v} \dfrac1{\rho_*R_*^2} \De \varrho + \dfrac{\rho_*}{\ga p_*}\pd_t\mathcal P_g, \label{eq1.2modified-b-lin-irro-1-cm}
\end{empheq}
\end{subequations}
and
 \begin{subequations}\label{eq1.3modified-lin-irro-1-cm}
\begin{empheq}[right=\empheqrbrace\text{on $\pd B_1$, $t>0$,}]{align}
\pd_r\Phi_l =&\, \pd_r\Phi_g = \pd_t\mathcal R + \dot{\boldsymbol{\Xi}}\cdot\hat{\bf r}, \label{eq1.3modified-a-lin-irro-1-cm}\\
\mathcal P_g - \mathcal P_l =& -\frac{\si}{R_*^2}(2+\De_S)\mathcal R, \label{eq1.3modified-b-lin-irro-1-cm}\\
\mathcal P_g =&\, \Rg T_\infty \varrho, \label{eq1.3modified-c-lin-irro-1-cm}
\end{empheq}
\end{subequations}

At order $\delta$, the zero-centroid-of-volume condition, \eqref{eq-centroid=0}, implies the following three orthogonality constraints  on  $\mathcal{R}$
 (see e.g. \cite[(C.28)--(C.30)]{SW-SIMA2011}):
\EQ{\label{eq-linear-centroid=0}
\bka{\mathcal R, Y_1^m}_{L^2(\mathbb S^2)} = 0,\qquad m=-1,0,1,
}
where $\bka{f,g}_{L^2(\mathbb S^2)} =  \int_0^{2\pi} \int_0^\pi f\bar g\sin\th\, d\th d\varphi$.

\begin{proposition}\label{prop-Xi=0}
Let $(\Phi_l, \mathcal P_l, \Phi_g, \mathcal P_g, \mathcal R, \varrho, {\bf \Xi})$ denote a sufficiently regular solution of the initial-boundary value problem for the system \eqref{eq1.1modified-lin-irro-1-cm}--\eqref{eq1.3modified-lin-irro-1-cm} with the initial data $\Phi_l\big|_{t=0}=0$ and $\boldsymbol{\Xi}\big|_{t=0} = {\bf 0}$, the far-field conditions \eqref{eq-far-field-0}, and the linearized zero-centroid-of-volume condition \eqref{eq-linear-centroid=0}.
Then, 
\[
{\bf \Xi}(t) \equiv {\bf 0},\qquad t\ge0,\qquad t\ge0.
\]
\end{proposition}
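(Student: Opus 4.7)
The plan is to exploit the structure of the $\ell=1$ spherical harmonic sector, where the surface tension restoring force vanishes (reflecting translational invariance), to show that all $\ell=1$ amplitudes are frozen and hence $\boldsymbol{\Xi}$ is stationary. Throughout, I follow the multipole / spherical harmonic decomposition developed in Section \ref{sec-linear-wellposed}:
\[
\mathcal{R}=\sum_{\ell,m}a_\ell^m(t)Y_\ell^m,\qquad \Phi_l=\sum_{\ell,m}b_\ell^m(t)\bke{\tfrac{1}{r}}^{\ell+1}Y_\ell^m,
\]
and apply it to the centroid-frame system \eqref{eq1.1modified-lin-irro-1-cm}--\eqref{eq1.3modified-lin-irro-1-cm}. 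The constraint \eqref{eq-linear-centroid=0} reads $a_1^m(t)\equiv0$ for $|m|\le1$, and the initial condition $\Phi_l|_{t=0}=0$ gives $b_\ell^m(0)=0$ for every $\ell,m$.

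The first step is to project the Laplace--Young condition \eqref{eq1.3modified-b-lin-irro-1-cm} onto $Y_1^m$. Because $\mathcal{P}_g=\mathcal{P}_g(t)$ is radial it contributes only to $\ell=0$, and because $-\De_S Y_1^m=2Y_1^m$ the operator $(2+\De_S)$ annihilates the $\ell=1$ sector. Together with $a_1^m\equiv0$ this forces $\mathcal{P}_l|_{\pd B_1}$ to have vanishing $Y_1^m$-component. The second step extends this to all of $\R^3\setminus B_1$: applying $\De$ to \eqref{eq1.1modified-a-lin-irro-1-cm} and using $\De\Phi_l=0$ yields $\De\mathcal{P}_l=0$, so each multipole component of $\mathcal{P}_l$ decays like $r^{-(\ell+1)}$; thus the boundary vanishing of the $\ell=1$ piece forces it to vanish identically in $\R^3\setminus B_1$.

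The third step projects \eqref{eq1.1modified-a-lin-irro-1-cm} onto $Y_1^m$, which gives $\dot b_1^m(t)(1/r)^2=-\frac{1}{\rho_*R_*}\cdot(\text{$\ell=1$ part of $\mathcal{P}_l$})=0$. With the initial condition $b_1^m(0)=0$ this yields $b_1^m(t)\equiv0$. The fourth step returns to the kinematic interface condition \eqref{eq1.3modified-a-lin-irro-1-cm}. Projecting onto $Y_1^m$:
\[
\pd_r\Phi_l\big|_{r=1,\,\ell=1}=-2b_1^m(t)Y_1^m=0,\qquad \pd_t\mathcal R\big|_{\ell=1}=\dot a_1^m(t)Y_1^m=0,
\]
so the condition reduces to $\dot{\boldsymbol{\Xi}}(t)\cdot\hat{\bf r}\equiv0$ on $\pd B_1$. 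Since $\hat{\bf r}$ spans exactly the real $Y_1^m$ subspace, the three components of $\dot{\boldsymbol{\Xi}}(t)$ must all vanish, and combined with $\boldsymbol{\Xi}(0)={\bf 0}$ we conclude $\boldsymbol{\Xi}(t)\equiv{\bf 0}$.

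The only subtlety I anticipate is step two --- justifying that the $\ell=1$ projection of $\mathcal{P}_l$ vanishes in the whole exterior, and not merely at $\pd B_1$. This is precisely where the far-field decay \eqref{eq-far-field-0} and the harmonicity of $\mathcal{P}_l$ (inherited from $\De\Phi_l=0$) enter: the unique harmonic function in $\R^3\setminus B_1$ of multipole index $(1,m)$ vanishing on $\pd B_1$ and decaying at infinity is identically zero. Beyond that, the argument is essentially algebraic and exploits the fact that rigid translations live in the kernel of the $\ell=1$ surface tension operator, which is the physical reason the centroid is a conserved (and here, vanishing) quantity.
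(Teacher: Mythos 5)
Your proof is correct and follows essentially the same route as the paper's: project onto the $\ell=1$ sector, use that $(2+\De_S)$ annihilates $Y_1^m$ and that $\mathcal{P}_g$ is spatially uniform to kill the $\ell=1$ trace of $\mathcal P_l$ on $\pd B_1$, extend by harmonicity and far-field decay to the whole exterior, conclude the $\ell=1$ part of $\Phi_l$ is constant in time (hence zero by the initial data), and read off $\dot{\boldsymbol{\Xi}}\cdot\hat{\bf r}=0$ from the kinematic condition together with the zero-centroid constraint. The only difference is presentational: you work with explicit multipole coefficients $a_1^m, b_1^m$ and the $r^{-2}$ exterior harmonic, while the paper phrases the same step via the projectors $\mathbb P_1^m$ and the maximum principle.
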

\begin{proof}
Define the projection operators onto the $|m|=1$ spherical harmonics,
\[
\mathbb P_1^m = \bka{\cdot,Y_1^m}_{L^2(\mathbb S^2)} Y_1^m.
\]
and denote
\[
V_l = \mathbb P_1^m\pd_t\Phi_l,\qquad
V_g = \mathbb P_1^m\pd_t\Phi_g,\qquad
U = \mathbb P_1^m\pd_t\varrho.
\]

First, applying $\mathbb P_1^m$ to \eqref{eq1.1modified-a-lin-irro-1-cm} yields
\EQ{\label{eq-Vl-proj}
V_l = -\frac1{\rho_lR_*} \mathbb P_1^m \mathcal P_l.
}
The far-field condition \eqref{eq-far-field-0} for $\mathcal{P}_l$ then gives $V_l\to0$ as $|y|\to\infty$.
Next, applying $\mathbb P_1^m\pd_t$ to \eqref{eq1.1modified-b-lin-irro-1-cm} implies that $V_l$ is harmonic in $\mathbb{R}^3\setminus B_1$ since $\mathbb P_1^m$ commutes with $\pd_t$ and $\De$, for any $|m|\le1$. 
Moreover, by applying $\mathbb P_1^m$ to \eqref{eq1.3modified-b-lin-irro-1-cm}, we derive
\EQ{\label{eq-Pl-proj}
- \mathbb P_1^m\mathcal{P}_l\big|_{\pd B_1} = \mathbb P_1\bkt{\frac{\si}{\rho_*R_*^2}(2+\De_S)\mathcal{R}} = 0,
}
where we've used the facts that $\mathcal{P}_g = \mathcal{P}_g(t)$ so that $\mathbb P_1^m\mathcal{P}_g\big|_{\pd B_1} = 0$ and that $(2+\De_S)Y_\ell^m = -(\ell+2)(\ell-1) Y_\ell^m$ so that $(2+\De_S)Y_1^m =0$.
Evaluating \eqref{eq-Vl-proj} on the boundary $\pd B_1$ and using \eqref{eq-Pl-proj} yields $V_l\big|_{\pd B_1} = 0$.
Then $V_l\equiv 0$ in $\R^3\setminus B_1$, for all $t\ge0$, by the maximum principle for harmonic functions.
In particular, $\mathbb P_1^m\Phi_l = \mathbb P_1^m\Phi_l\big|_{t=0}\equiv0$ for all $t\ge0$.

Finally, applying $\mathbb P_1^m$ to \eqref{eq1.3modified-a-lin-irro-1-cm} and using the fact that $\mathbb P_1^m$ commutes with $\pd_r$, we deduce
\EQ{\label{eq-P1Xi}
\mathbb P_1^m [\dot{\bf\Xi}(t)\cdot\hat{\bf r} ] = 0,\qquad |m|\le 1,
}
where we've used the linearized zero-centroid-of-volume condition \eqref{eq-linear-centroid=0} so that $\mathbb P_1^m\mathcal R = 0$.
Since 
\[
\hat{\bf r} = 
\begin{pmatrix}
\sin\th\cos\varphi\\ 
\sin\th\sin\varphi\\ 
\cos\th
\end{pmatrix}
= \sqrt{\frac{2\pi}3} 
\begin{pmatrix}
Y_1^{-1}-Y_1^1\\ 
i(Y_1^{-1} + Y_1^1)\\ 
\frac1{\sqrt2} Y_1^0
\end{pmatrix}
,\] 
\eqref{eq-P1Xi} implies that for $|m|\le1$
\[
0 = \bka{ \dot{\bf\Xi}(t)\cdot\hat{\bf r} , Y_1^m}_{L^2(\mathbb S^2)}
= \sqrt{\frac{2\pi}3}\, \left< \bkt{\dot\Xi_1(Y_1^{-1} - Y_1^1) + i\dot\Xi_2(Y_1^{-1} + Y_1^1) + \frac1{\sqrt2}\, \dot\Xi_3 Y_1^0} , Y_1^m\right>_{L^2(\mathbb S^2)},
\]
where ${\bf \Xi}(t) = (\Xi_1(t), \Xi_2(t), \Xi_3(t))$. This yields that $\dot\Xi_1(t) = \dot\Xi_2(t) = \dot\Xi_3(t) = 0$ for all $t\ge0$. 
By the choice of the initial data ${\bf \Xi}\big|_{t=0} = {\bf 0}$, we conclude that ${\bf \Xi}(t) = {\bf 0}$ for all $t\ge0$, completing the proof of the proposition.

\end{proof}

\section{Uniform decay of heat solutions in a bounded domain}

\begin{lemma}\label{lem-unif-decay-heat}
Consider the Dirichlet problem of heat equation in a bounded domain $\Om$:
\EQ{\label{eq-heat}
\pd_t u &= \bar\ka \De u,\ \text{ in }\Om,\\
u& = 0,\quad\ \ \text{ on }\pd\Om.
}
Then $\max_{x\in\overline{\Om}}|u(x,t)|\to 0$ as $t\to\infty$.
\end{lemma}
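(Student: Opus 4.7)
The plan is to combine an $L^2$ energy decay, powered by the spectral gap of the Dirichlet Laplacian on the bounded domain $\Om$, with a parabolic smoothing estimate that upgrades $L^2$ control to $L^\infty$ control. I will implicitly assume enough regularity on the initial datum to make $\|u(\cdot,0)\|_{L^2(\Om)}$ finite; in the application of Section~\ref{sec-linear-wellposed} the object to which the lemma is applied ($u=\pd_t\varrho_\ell^m$ on $B_1$) is smooth on $(0,\infty)\times B_1$, so this causes no issue.

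First I would establish $L^2$ decay. Multiplying \eqref{eq-heat} by $u$, integrating over $\Om$, and using the Dirichlet condition to discard the boundary term yields
\[
\tfrac12\,\tfrac{d}{dt}\|u(\cdot,t)\|_{L^2(\Om)}^2 \;=\; -\bar\ka\,\|\nb u(\cdot,t)\|_{L^2(\Om)}^2.
\]
Since $\Om$ is bounded, the Poincar\'e inequality holds with sharp constant $\la_1>0$, the first eigenvalue of $-\De$ on $\Om$ with Dirichlet boundary condition; that is, $\|\nb u\|_{L^2(\Om)}^2 \ge \la_1\|u\|_{L^2(\Om)}^2$. Gronwall's inequality then gives the exponential decay
\[
\|u(\cdot,t)\|_{L^2(\Om)} \;\le\; e^{-\bar\ka\la_1 t}\,\|u(\cdot,0)\|_{L^2(\Om)}.
\]

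Second, I would upgrade this to uniform decay using the smoothing property of the Dirichlet heat semigroup $\{e^{\bar\ka s\De_D}\}_{s\ge0}$ on $\Om$. For any $t\ge 2$, the semigroup property writes $u(\cdot,t)=e^{\bar\ka(t-1)\De_D}u(\cdot,1)$, and by standard parabolic regularity (for instance the Moser/Nash estimates, or equivalently the pointwise upper bound on the Dirichlet heat kernel) there exists $C=C(\Om,\bar\ka)$ with
\[
\|e^{\bar\ka s\De_D}f\|_{L^\infty(\Om)} \;\le\; C\,\|f\|_{L^2(\Om)}\qquad\text{for every }s\ge 1.
\]
Applying this with $s=t-1$ and chaining with the $L^2$ decay at time $1$,
\[
\max_{x\in\overline{\Om}}|u(x,t)| \;\le\; C\, e^{-\bar\ka\la_1(t-1)}\,\|u(\cdot,1)\|_{L^2(\Om)}\qquad(t\ge 2),
\]
which tends to $0$ exponentially fast, yielding the claim.

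The main (and essentially only) obstacle is identifying the minimal regularity hypothesis on $u(\cdot,0)$ needed to justify these steps rigorously; this is unstated in the lemma. An alternative route via the eigenfunction expansion $u(x,t)=\sum_n c_n e^{-\bar\ka\la_n t}\phi_n(x)$ works equally well and accommodates merely $L^2$ data, but converting $L^2$ convergence to uniform convergence still requires either Weyl-type bounds on $\|\phi_n\|_{L^\infty}$ or the same parabolic smoothing input, so the Poincar\'e $+$ smoothing route above is the most economical.
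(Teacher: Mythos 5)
Your argument is correct, but it follows a genuinely different route from the paper's. The paper also begins with the energy identity and the Poincar\'e inequality to get $\|u(\cdot,t)\|_{L^2(\Om)}\to0$, but then it proceeds \emph{qualitatively}: by the parabolic maximum principle the function $M(t)=\max_{\overline\Om}|u(\cdot,t)|$ is nonincreasing, hence converges to some $C\ge0$, and a compactness-plus-continuity contradiction argument (extracting a convergent sequence of maximizing points $y_{t_k}\to x_\infty$ and comparing with the vanishing $L^2$ norm) forces $C=0$. Your second step instead invokes the $L^2\to L^\infty$ smoothing (ultracontractivity) of the Dirichlet heat semigroup, $\|e^{\bar\ka s\De_D}f\|_{L^\infty}\le C\|f\|_{L^2}$ for $s\ge1$, which follows from the domain-monotone Gaussian upper bound on the heat kernel and is entirely standard. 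What your route buys is a stronger, quantitative conclusion: exponential decay of $\max_{\overline\Om}|u|$ for merely $L^2$ initial data, which in fact improves on the paper's subsequent Remark, where the exponential rate is obtained only under an additional summability condition $\sum_j|c_j|(1+\la_j)^2<\infty$ on the eigenfunction coefficients. What the paper's route buys is economy of input --- only the maximum principle and the qualitative $L^2$ decay are used --- though its continuity step (choosing $\de$ so that $|u(x,t)-u(x_\infty,t)|<C/8$ uniformly in $t$) implicitly leans on the same parabolic smoothing to get equicontinuity in $x$ uniformly in large $t$, so the two proofs are not as far apart in their real prerequisites as they first appear. Your closing caveat about regularity of the data is handled the same way in the paper (parabolic smoothing lets one assume $u$ smooth for $t>0$), so it is not a gap.
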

\begin{proof}
First of all, by the parabolic smoothing for the heat equation, we may assume the smoothness of $u$. 
Next, using the energy method and Poincar\'e inequality, $\norm{u(t)}_2\to0$ as $t\to\infty$.

Let $M(t) = \max_{x\in\overline{\Om}} |u(x,t)| = u(y_t,t)$ for some $y_t\in\overline{\Om}$.
By the parabolic maximum principle and the Dirichlet boundary condition, the function $M(t)$ is nonincreasing in $t$.
Since $M(t)$ is bounded below by $0$, $\lim_{t\to\infty}M(t) = C\ge0$ exists. 

We claim that $C=0$. Indeed, if $C>0$, there exists $T>0$ such that $M(t)>C/2$ for all $t\ge T$. 
Since $\overline{\Om}$ is compact and $y_t\in\overline{\Om}$, there exists a sequence $T\le t_1<t_2<\ldots$, such that $x_k:= y_{t_k}\to x_\infty\in\overline{\Om}$ as $k\to\infty$.
Note that $|u(x_k,t_k)| = M(t_k)>C/2$ since $t_k\ge T$.
Moreover, by the continuity of $u$, there exists $\de>0$ such that $|u(x,t) - u(x_\infty,t)|<C/8$ for all $x\in B_\de(x_\infty)\cap\overline{\Om}$.
For $k$ large enough such that $x_k\in B_\de(x_\infty)\cap\overline{\Om}$, we have $|u(x_k,t) - u(x_\infty,t)|<C/8$, which implies that $|u(x,t) - u(x_k,t)|< C/4$ for all $x\in B_\de(x_\infty)\cap\overline{\Om}$.
This leads to a contradiction since
\EQN{
\norm{u(t_k)}_2^2 = \int_{\Om} |u(x,t_k)|^2\, dx &\ge \int_{B_\de(x_\infty)\cap\overline{\Om}} |u(x,t_k)|^2\, dx\\
&\ge \int_{B_\de(x_\infty)\cap\overline{\Om}} \bke{|u(x_k,t_k)| - \frac{C}4}^2\, dx
> \frac{C^2}{16} \abs{B_\de(x_\infty)\cap\overline{\Om}},
}
where the left hand side vanishes as $t\to\infty$.
Therefore, $C=0$, completing the proof of Lemma \ref{lem-unif-decay-heat}.
\end{proof}

\begin{remark}[Exponential decay rate]
If the initial data has certain regularity, the decay of the solution $u$ to the Dirichlet problem of the heat equation \eqref{eq-heat} can be shown exponential.
Indeed, expanding $u$ with respect to Dirichlet eigenfunctions $\phi_j$ of $-\De$, $-\De\phi_j = \la_j\phi_j$ on $\Om$, $\phi_j\big|_{\pd\Om} = 0$, with $0<\la_1\le\la_2\le\ldots$ and  $\norm{\phi_j}_{L^2(\Om)}=1$, we obtain $u(x,t) = \sum_{j=1}^\infty c_je^{-\la_j t}\phi_j(x)$, where $c_j = \int_\Om u(x,0)\phi_j(x)\, dx$.
By the Sobolev inequality, since $4>3/2$,
\EQN{
\norm{u(t)}_{L^\infty(\Om)} &\lec \norm{u(t)}_{W^{4,2}(\Om)} = \norm{(I-\De)^2 u(t)}_{L^2(\Om)}
= \norm{\sum_{j=1}^\infty c_j e^{-\la_j t} (I-\De)^2\phi_j}_{L^2(\Om)}\\
&\le \sum_{j=1}^\infty |c_j| e^{-\la_j t} \norm{ (I-\De)^2\phi_j}_{L^2(\Om)}
= \sum_{j=1}^\infty |c_j| e^{-\la_j t} \norm{ (1+\la_j)^2\phi_j}_{L^2(\Om)}\\
&= \sum_{j=1}^\infty |c_j| e^{-\la_j t} (1+\la_j)^2 
\le e^{-\la_1 t} \sum_{j=1}^\infty |c_j| (1+\la_j)^2,
}
which decays exponentially as $t\to\infty$ provided the data satisfies $\sum_{j=1}^\infty |c_j| (1+\la_j)^2<\infty$.
\end{remark}

\section{Verification of far-field conditions}\label{sec-far-field}

In this appendix, we verify the far-field conditions \eqref{eq-far-field-0} for the solution constructed in Section \ref{sec-linear-wellposed}, in the proof of Theorem \ref{thm-wellposed}.

Note that it follows from \eqref{eq-dot-a} and \eqref{eq-a-soln} that
\[
b_\ell^m = -\frac1{\ell+1}\, \dot a_\ell^m \sim \frac{\sqrt{(\ell+2)(\ell+1)(\ell-1)}}{\ell+1} \sim \sqrt{\ell}.
\]
Since $|\pd_\th^a\pd_\varphi^b Y_\ell^m(\th,\varphi)| = O(\ell^{\frac12+a+b})$ for $a+b=0,1,2$ (see e.g. \cite[Proposition 4.11]{SW-SIMA2011}),
\EQN{
\nb\Phi_l &= \pd_r\Phi_l \hat{\bf r} + \frac1r \pd_\th\Phi_l \hat{\boldsymbol{\th}} + \frac1{r\sin\th} \pd_\varphi\Phi_l\hat{\boldsymbol{\varphi}}\\
&= \sum_{\ell=0}^\infty \sum_{|m|\le\ell} b_\ell^m \bke{- \frac{\ell+1}{r^{\ell+2}} Y_\ell^m \hat{\bf r} + \frac1{r^{\ell+2}} \pd_\th Y_\ell^m + \frac1{r^{\ell+2}\sin\th} \pd_\varphi Y_\ell^m}
}
implies that, for $r\ge1$ and fixed $\th\in(0,\pi)$,
\EQN{
|\nb\Phi_l| 
&\le \frac1{r^2} \sum_{\ell=0}^\infty \sum_{|m|\le\ell} \frac{|b_\ell^m|}{r^\ell}\bke{(\ell+1)|Y_\ell^m| + |\pd_\th Y_\ell^m| + \frac{|\pd_\varphi Y_\ell^m|}{|\sin\th|}} \\
&\lec_{\hspace{-0.1cm}\th}\, \frac1{r^2} \sum_{\ell=0}^\infty \sum_{|m|\le\ell} \frac{\sqrt{\ell}\cdot\ell^{\frac32}}{r^\ell}
= \frac1{r^2} \sum_{\ell=0}^\infty  \frac{\ell^3}{r^\ell}
= \frac1{r^2} \cdot \frac{r^4+4r^3+1}{(r-1)^4} = O(r^{-2})\ \text{ as }r\to\infty.
}
If $\th = 0$ or $\pi$, then $y_1=y_2=0$, so $\hat{\bf r} = (y_3/|y_3|) [0,0,1]^\top$, $\hat{\boldsymbol{\th}} = \hat{\boldsymbol{\varphi}} = {\bf 0}$.
Thus, $|\nb\Phi_l|=|\pd_r\Phi_l|\lec O(r^{-2})$ by following the same argument above.
For $D^2\Phi_l$, note that
\EQN{
D^2\Phi_l 
&= \pd_r^2\Phi_l\hat{\bf r}\otimes\hat{\bf r} + \bke{\frac1r\pd_\th\pd_r\Phi_l - \frac1{r^2}\pd_\th\Phi_l}\hat{\bf r}\otimes\hat{\bf\boldsymbol{\th}} + \bke{\frac1{r\sin\th}\pd_\varphi\pd_r\Phi_l - \frac1{r^2\sin\th}\pd_\varphi\Phi_l}\hat{\bf r}\otimes\hat{\boldsymbol{\varphi}}\\
&\quad + \pd_r\bke{\frac1r\pd_\th\Phi_l}\hat{\boldsymbol{\th}}\otimes\hat{\bf r} + \bke{\frac1{r^2}\pd_\th^2\Phi_l + \frac1r\pd_r\Phi_l}\hat{\boldsymbol{\th}}\otimes\hat{\boldsymbol{\th}} + \bke{\frac1{r^2\sin\th}\pd_\varphi\pd_\th\Phi_l - \frac{\cot\th}{r^2\sin\th}\pd_\varphi\Phi_l}\hat{\boldsymbol{\th}}\otimes\hat{\boldsymbol{\varphi}}\\
&\quad + \pd_r\bke{\frac1{r\sin\th}\pd_\varphi\Phi_l}\hat{\boldsymbol{\varphi}}\otimes\hat{\bf r} + \frac1r\pd_\th\bke{\frac1{r\sin\th}\pd_\varphi\Phi_l}\hat{\boldsymbol{\varphi}}\otimes\hat{\boldsymbol{\th}} \\
&\quad + \bke{\frac1{r^2\sin^2\th}\pd_\varphi^2\Phi_l + \frac{\cot\th}{r^2}\pd_\th\Phi_l + \frac1r\pd_r\Phi_l}\hat{\boldsymbol{\varphi}}\otimes\hat{\boldsymbol{\varphi}},
}
which yields the decay $D^2\Phi_l(y,t)=O(|y|^{-3})$ as $|y|\to\infty$ by a similar argument above.
As for $\mathcal{P}$, by using \eqref{eq-dot-a} and \eqref{eq-a-old-hmode}, we have
\EQN{
\dot b_\ell^m = - \frac1{\ell+1}\, \ddot a_\ell^m
 = \frac1{\ell+1}\, \frac{\si}{\rho_lR_*^2}\, (\ell+2)(\ell+1)(\ell-1) a_\ell^m
 = \frac{\si}{\rho_lR_*^2}\, (\ell+2)(\ell-1) \sim \ell^2.
}
Therefore, it follows from \eqref{eq-Pl-expand} that
\EQN{
|\mathcal P_l| &\le \rho_lR_* \sum_{\ell=0}^\infty \sum_{|m|\le\ell} |\dot b_\ell^m| \bke{\frac1r}^{\ell+1} |Y_\ell^m|\\
&\lec \frac1r \sum_{\ell=0}^\infty \sum_{|m|\le\ell} \frac{\ell^2\cdot\ell^{\frac12}}{r^\ell}
= \frac1r \sum_{\ell=0}^\infty \frac{\ell^{\frac72}}{r^\ell}
\le \frac1r \sum_{\ell=0}^\infty \frac{\ell^4}{r^\ell}
= \frac1r \cdot \frac{2(r^3 + 7r^4 + 4r^5)}{(r-1)^5}
= O(r^{-1})\ \text{ as }r\to\infty.
}

\bigskip

\end{document}